\def\update{May 25, 2013}
\newtheorem{theorem}{Theorem}[]
\newtheorem{lemma}{Lemma}
\newtheorem{corollary}{Corollary}
\newtheorem{proposition}{Proposition}
\newtheorem*{theorem*}{Theorem}
\newtheorem*{proposition*}{Proposition}
\newtheorem*{corollary*}{Corollary}
\newtheorem*{question*}{Question}
\theoremstyle{definition}
\newtheorem{remark}{Remark}
\newtheorem*{remark*}{Remark}
\def\BbbL{{\mathbb L}}
\def\sfK{{\sf K}}
\def\sfS{{\sf S}}
\def\N{{\mathbf N}}
\def\Q{{\mathbf Q}}
\def\R{{\mathbf R}}
\def\Z{{\mathbf Z}}
\def\ue{\underline{e}}
\def\un{\underline{n}}
\def\uq{\underline{q}}
\def\uu{\underline{u}}
\def\kappatilde{\tilde{\kappa}}
\begin{document}

\title[Liouville Numbers, Liouville Sets and Liouville Fields]{ Liouville Numbers, Liouville Sets \\
and Liouville Fields
}

\author{K. Senthil Kumar, R. Thangadurai and M. Waldschmidt} 
\address[K. Senthil Kumar and R. Thangadurai]{Harish-Chandra Research Institute, Chhatnag Road, Jhunsi, Allahbad, 211019, India}
\address[M. Waldschmidt]{Institut de Math\'ematiques de Jussieu, 
Th\'eorie des Nombres Case courrier 247, 
Universit\'e Pierre et Marie Curie (Paris 6), 
PARIS Cedex 05 FRANCE}
\email[K. Senthil Kumar]{senthil@hri.res.in}
\email[R. Thangadurai]{thanga@hri.res.in}
\email[M. Waldschmidt]{miw@math.jussieu.fr}

\begin{abstract} 
Following earlier work by \'{E}.~Maillet 100 years ago, we introduce the definition of a {\it Liouville set}, which extends the definition of a Liouville number. We also define a {\it Liouville field}, which is a field generated by a Liouville set. Any Liouville number belongs to a Liouville set $\sfS$ having the power of continuum and such that $\Q\cup\sfS$ is a Liouville field. 
\\
\null
\hfill {\bf Update: \update }
\end{abstract}
\maketitle

\section{ Introduction}\label{Section:Introduction}

For any integer $q$ and any real number $x \in \R$, we denote by
$$
\Vert qx\Vert = \min_{m\in{ \Z}} |qx -m|
$$
the distance of $qx$ to the nearest integer. Following \'E.~Maillet \cite{Maillet,Maillet1922}, an irrational real number $\xi$ is said to be a {\it Liouville number} if, for each integer $n\geq 1$, there exists an integer $q_n \geq 2$ such that the sequence $\bigl(u_n(\xi)\bigr)_{n\ge 1}$ of real numbers defined by 
$$
u_n(\xi) = -\frac{\log\Vert q_n\xi \Vert}{\log q_n}
$$ 
satisfies 
$\displaystyle \lim_{n\to\infty} u_n(\xi) = \infty$.
If $p_n$ is the integer such that $\Vert q_n\xi\Vert = |\xi q_n - p_n|$, then the definition of $u_n(\xi)$ can be written
$$
\left| q_n \xi - p_n \right| = \frac{1}{q_n^{u_n(\xi)}}\cdotp
$$
An equivalent definition is to saying that a Liouville number is a real number $\xi$ such that, for each integer $n\geq 1$, there exists a rational number $p_n/q_n$ with $q_n \geq 2$ such that
$$
0<\left| \xi-\frac{p_n}{q_n}\right| \le \frac{1}{q_n^n}\cdotp
$$
We denote by ${\BbbL}$ the set of Liouville numbers. Following \cite{liouville1}, any Liouville number is transcendental.

We introduce the notions of a {\it Liouville set} and of a {\it Liouville field}. They extend what was done by \'{E}. Maillet in Chap.~III of \cite{Maillet}.

\smallskip

\noindent{\bf Definition}.
A {\it Liouville set} is a subset $\sfS$ of ${\BbbL}$ for which there exists an increasing sequence $(q_n)_{n\ge 1}$ of positive integers having the following property: for any $\xi\in \sfS$, there exists a sequence $\bigl(b_n\bigr)_{n\ge 1}$ of positive rational integers and there exist two positive constants $\kappa_1$ and $\kappa_2$ such that, for any sufficiently large $n$, 
\begin{equation}\label{Equation:LiouvilleSet}
1\le b_n \leq q_{n}^{\kappa_1} \mbox{ and } \Vert b_n \xi \Vert \le \frac{1}{q_n^{ \kappa_2 n}}\cdotp
\end{equation} 

It would not make a difference if we were requesting these inequalities to hold for any $n\ge 1$: it suffices to change the constants $\kappa_1$ and $\kappa_2$. 

\smallskip

\noindent{\bf Definition}.
A {\it Liouville field} is a field of the form $\Q(\sfS)$ where $\sfS$ is a Liouville set. 

From the definitions, it follows that, for a real number $\xi$, the following conditions are equivalent:
\\
$(i)$ $\xi$ is a Liouville number. 
\\
$(ii)$ $\xi$ 
belongs to some Liouville set.
\\
$(iii)$ The set $\{\xi\}$ is a Liouville set. 
\\
$(iv)$ The field $\Q(\xi)$ is a Liouville field. 

If we agree that the empty set is a Liouville set and that $\Q$ is a Liouville field, then any subset of a Liouville set is a Liouville set, and also (see Theorem $\ref{Theorem:Field}$) any subfield of a Liouville field is a Liouville field. 

\noindent{\bf Definition.} 
Let $\uq=(q_n)_{n\ge 1}$ be an increasing sequence of positive integers and let $\uu=( u_n)_{n\ge 1}$ be a sequence of positive real numbers such that $ u_n \rightarrow\infty $ as $ n\rightarrow\infty$.
Denote by $\sfS_{\uq,\uu}$ the set of $\xi\in{\BbbL}$ such that there exist two positive constants $\kappa_1$ and $\kappa_2$ and there exists a sequence $\bigl(b_n\bigr)_{n\ge 1}$ of positive rational integers with 
$$
1 \leq b_n \leq q_{n}^{\kappa_1} \mbox{ and } 
\Vert b_n \xi \Vert \le 
\frac{1}{q_n^{\kappa_2 u_n}}\cdotp
$$
Denote by $\un$ the sequence $\uu=(u_n)_{n\ge 1}:= (1,2,3,\dots)$ with $u_n=n$ ($n\ge 1$). 
For any increasing sequence $\uq=(q_n)_{n\ge 1}$ of positive integers, we denote by $\sfS_{\uq}$ the 
set 
$\sfS_{\uq,\un}$. 

Hence, by definition, a Liouville set is a subset of some $\sfS_{\uq}$. 
In section $\ref{section:PfLemmaSqu}$ we prove the following lemma: 

\begin{lemma}\label{Lemma:Squ}
For any increasing sequence $\uq$ of positive integers and any sequence $\uu$ of positive real numbers which tends to infinity, the set $\sfS_{\uq,\uu}$
is a Liouville set.
\end{lemma}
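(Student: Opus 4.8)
The plan is to verify the definition of a Liouville set for $\sfS_{\uq,\uu}$ \emph{directly}, by exhibiting a single increasing sequence of positive integers $\uq'=(q'_m)_{m\ge 1}$ (depending only on $\uq$ and $\uu$, not on any particular $\xi$) that simultaneously witnesses the Liouville condition with the standard exponent $\un$ for every $\xi\in\sfS_{\uq,\uu}$. The tempting shortcut would be to keep the original indexing, take $\uq'=\uq$, and try to absorb the exponent $u_n$ into the constants $\kappa_1,\kappa_2$. I expect this to be exactly the main obstacle, and to fail whenever $u_n$ grows more slowly than $n$ (say $u_n=\sqrt n$): there one cannot simultaneously keep the size bound $b_n\le q_n^{\kappa_1}$ under control and force the approximation exponent $\kappa_2 u_n$ to dominate $\kappa_2' n$ for a fixed $\kappa_2'>0$. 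The remedy is that $u_n\to\infty$, so slow growth can be compensated by \emph{spacing out} the sequence.

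First I would build the reindexing. Using only $u_n\to\infty$, I would construct a strictly increasing map $\sigma\colon\Z_{\ge 1}\to\Z_{\ge 1}$ with $u_{\sigma(m)}\ge m$ for every $m$: choose $\sigma(m)$ greedily as any index exceeding $\sigma(m-1)$ with $u_{\sigma(m)}\ge m$, which is possible because $\{n: u_n\ge m\}$ is infinite. Then set $q'_m:=q_{\sigma(m)}$; since $(q_n)$ is increasing and $\sigma$ is strictly increasing, $\uq'=(q'_m)_{m\ge 1}$ is again an increasing sequence of positive integers. Note $\sigma$, and hence $\uq'$, is chosen once and for all.

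Next I would verify the defining inequalities. Fix $\xi\in\sfS_{\uq,\uu}$, with its constants $\kappa_1,\kappa_2$ and sequence $(b_n)$, and set $b'_m:=b_{\sigma(m)}$, $\kappa_1':=\kappa_1$, $\kappa_2':=\kappa_2$. The size bound is immediate, $1\le b'_m=b_{\sigma(m)}\le q_{\sigma(m)}^{\kappa_1}=(q'_m)^{\kappa_1'}$, and for the approximation, using $q'_m>1$ together with $u_{\sigma(m)}\ge m$,
\[
\Vert b'_m\xi\Vert=\Vert b_{\sigma(m)}\xi\Vert\le q_{\sigma(m)}^{-\kappa_2 u_{\sigma(m)}}=(q'_m)^{-\kappa_2 u_{\sigma(m)}}\le (q'_m)^{-\kappa_2' m}.
\]
Because $\sigma(m)\to\infty$, the inequalities that hold for all large $n$ become inequalities holding for all large $m$, so $\xi\in\sfS_{\uq',\un}=\sfS_{\uq'}$.

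Finally, since this argument applies to every $\xi\in\sfS_{\uq,\uu}$ with the one sequence $\uq'$, and $\sfS_{\uq,\uu}\subseteq\BbbL$ by definition, the set $\sfS_{\uq,\uu}$ meets the definition of a Liouville set; equivalently, $\sfS_{\uq,\uu}\subseteq\sfS_{\uq'}$ and any subset of a Liouville set is a Liouville set. I expect the only genuine difficulty to be the realization that the original indexing must be abandoned when $u_n=o(n)$; once the reindexing $\sigma$ with $u_{\sigma(m)}\ge m$ is in place, everything reduces to a routine substitution with the constants carried over unchanged.
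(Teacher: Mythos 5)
Your proof is correct and takes essentially the same route as the paper: the paper also greedily constructs an increasing sequence of indices $(m_n)_{n\ge 1}$ with $u_{m_n}>n$, sets $q'_n=q_{m_n}$, and concludes $\sfS_{\uq,\uu}\subset \sfS_{\uq'}$, so that $\sfS_{\uq,\uu}$ is a subset of some $\sfS_{\uq'}$ and hence a Liouville set. Your write-up merely makes explicit the verification (carrying over $\kappa_1,\kappa_2$ and $b_{\sigma(m)}$) that the paper leaves to the reader.
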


Notice that if $(m_n)_{n\ge 1}$ is an increasing sequence of positive integers, then for the subsequence $\uq'=(q_{m_n})_{n\ge 1}$ of the sequence $\uq$, we have $\sfS_{\uq',\uu}\supset \sfS_{\uq,\uu}$.

\medskip

\noindent
{\bf Example}. Let $\uu=(u_n)_{n\ge 1}$ be a sequence of positive real numbers which tends to infinity. Define 
$f:\N\rightarrow \R_{>0}$ by $f(1)=1$ and 
$$
f(n)=u_1u_2\cdots u_{n-1} \qquad (n\ge 2),
$$
so that $f(n+1)/f(n)=u_n$ for $n\ge 1$. Define the sequence $\uq=(q_n)_{n\ge 1}$ by $q_n= \lfloor 2^{f(n)}\rfloor$.
Then, for any real number $t>1$, the number 
$$
\xi_t=\sum_{n\ge 1}
\frac{1}{ \lfloor t^{f(n)}\rfloor}
$$ 
belongs to $\sfS_{\uq,\uu}$. The set $\{\xi_t\; \mid \; t>1\}$ has the power of continuum, since $\xi_{t_1}<\xi_{t_2}$ for $t_1>t_2>1$. 

The sets $\sfS_{\uq,\uu}$ have the following property (compare with Theorem $\mathrm{I}_3$ in \cite{Maillet}): 

\begin{theorem}\label{Theorem:Field}
For any increasing sequence $\uq$ of positive integers and any sequence $\uu$ of positive real numbers which tends to infinity, the set $\Q\cup \sfS_{\uq,\uu}$
is a field.
\end{theorem}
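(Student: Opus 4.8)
The plan is to regard $K=\Q\cup\sfS_{\uq,\uu}$ as a subset of $\R$ that already contains the subfield $\Q$, and to verify that $K$ is closed under subtraction, multiplication, and inversion of nonzero elements; associativity, commutativity and distributivity are then inherited from $\R$. A preliminary observation streamlines every case: if an \emph{irrational} number $\zeta$ admits positive constants $\kappa_1,\kappa_2$ and positive integers $(c_n)_{n\ge1}$ with $1\le c_n\le q_n^{\kappa_1}$ and $\Vert c_n\zeta\Vert\le q_n^{-\kappa_2 u_n}$ for all large $n$, then in fact $\zeta\in\sfS_{\uq,\uu}$. Indeed, letting $a_n$ be the integer nearest to $c_n\zeta$ gives rationals $a_n/c_n$ with $|\zeta-a_n/c_n|\le q_n^{-\kappa_2 u_n}/c_n$; since $\zeta$ is irrational the reduced denominators are $\ge 2$ for large $n$ (otherwise $\zeta$ would be arbitrarily close to integers, forcing $\zeta\in\Z$), and because $u_n\to\infty$ these approximations beat every fixed power of the denominator, so $\zeta$ is a Liouville number. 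Thus checking $\zeta\in\sfS_{\uq,\uu}$ reduces to exhibiting one multiplier sequence satisfying the two displayed inequalities for large $n$ (harmless by the remark following the definition), while any operation that happens to produce a rational number lands in $\Q\subseteq K$. The single mechanism used throughout is that $u_n\to\infty$ (and $q_n\to\infty$) lets me absorb any fixed constant factor and any fixed power $q_n^{c}$ into a slight decrease of $\kappa_2$ or increase of $\kappa_1$, since $q_n^{(\kappa_2-\kappa_2')u_n}\to\infty$.

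I then run through the operations. Negation is immediate from $\Vert -x\Vert=\Vert x\Vert$. For $\xi\in\sfS_{\uq,\uu}$ with data $(b_n,\kappa_1,\kappa_2)$ and a rational $r=p/q$, the multiplier $qb_n$ treats both $\xi+r$ and, for $r\ne0$, $r\xi$ at once: one computes $\Vert qb_n(\xi+r)\Vert=\Vert qb_n\xi\Vert\le q\,\Vert b_n\xi\Vert$ and $\Vert qb_n\cdot r\xi\Vert=\Vert pb_n\xi\Vert\le|p|\,\Vert b_n\xi\Vert$, and the constants $q,|p|$ are absorbed. For two elements $\xi,\eta\in\sfS_{\uq,\uu}$ with respective data $(b_n,\kappa_1,\kappa_2)$ and $(b_n',\kappa_1',\kappa_2')$, the common multiplier $b_nb_n'$ handles the sum: the triangle inequality for $\Vert\cdot\Vert$ gives $\Vert b_nb_n'(\xi+\eta)\Vert\le b_n'\Vert b_n\xi\Vert+b_n\Vert b_n'\eta\Vert$, and each summand is a fixed power of $q_n$ times $q_n^{-\kappa_2 u_n}$ (resp.\ $q_n^{-\kappa_2' u_n}$), hence at most $q_n^{-\kappa_2'' u_n}$ for large $n$, with $b_nb_n'\le q_n^{\kappa_1+\kappa_1'}$.

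The product is where the genuine work lies, and it is the step I expect to be the main obstacle. Writing $b_n\xi=a_n+\epsilon_n$ and $b_n'\eta=a_n'+\epsilon_n'$ with $a_n,a_n'\in\Z$ and $|\epsilon_n|=\Vert b_n\xi\Vert$, $|\epsilon_n'|=\Vert b_n'\eta\Vert$, the multiplier $b_nb_n'$ yields $b_nb_n'\xi\eta=a_na_n'+\bigl(a_n\epsilon_n'+a_n'\epsilon_n+\epsilon_n\epsilon_n'\bigr)$, so $\Vert b_nb_n'\xi\eta\Vert\le|a_n\epsilon_n'|+|a_n'\epsilon_n|+|\epsilon_n\epsilon_n'|$. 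The difficulty is bounding the integer parts: from $|a_n|\le b_n|\xi|+1\le q_n^{\kappa_1+1}$ (for large $n$) and the analogous bound for $a_n'$, each cross term is a fixed power of $q_n$ times $q_n^{-\kappa_2' u_n}$ or $q_n^{-\kappa_2 u_n}$, again absorbed into $q_n^{-\kappa_2''' u_n}$.

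Finally, for the inverse of $\xi\ne0$ the new multiplier must be $a_n$, the integer nearest to $b_n\xi$, rather than $b_n$: from $b_n\xi=a_n+\epsilon_n$ one gets $a_n/\xi=b_n-\epsilon_n/\xi$, whence $\Vert a_n/\xi\Vert\le|\epsilon_n|/|\xi|\le|\xi|^{-1}q_n^{-\kappa_2 u_n}$. Here one must first check that $a_n\ne0$ for large $n$ (otherwise $|\xi|\le q_n^{-\kappa_2 u_n}\to0$ would force $\xi=0$), then replace $a_n$ by $|a_n|\ge1$ using the evenness of $\Vert\cdot\Vert$, noting $|a_n|\le q_n^{\kappa_1+1}$. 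Assembling the cases with the fact that $\Q$ is a field and that numbers built from irrationals via these operations are either rational (hence in $\Q$) or irrational with the required approximation data (hence in $\sfS_{\uq,\uu}$ by the preliminary observation) shows that $K=\Q\cup\sfS_{\uq,\uu}$ satisfies all the field axioms.
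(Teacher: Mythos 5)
Your proof is correct and follows essentially the same route as the paper's: closure under sums and products via the combined multipliers $qb_n$ and $b_nb_n'$ with nearest-integer numerators, absorption of fixed powers of $q_n$ into the exponent using $u_n\to\infty$, and inversion via the multiplier $a_n$ (which is exactly the paper's lemma on $1/\xi$). Your preliminary observation — that an irrational number with the required approximation data is automatically a Liouville number, hence lies in $\sfS_{\uq,\uu}$ — makes explicit a step the paper leaves implicit, which is a welcome bit of extra care rather than a different approach.
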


We denote this field by $\sfK_{\uq,\uu}$, and by $\sfK_{\uq}$ for the sequence $\uu=\un$. From Theorem $\ref{Theorem:Field}$, it follows that a field is a Liouville field if and only if it is a subfield of some $\sfK_{\uq}$. Another consequence is that, if $\sfS$ is a Liouville set, then $\Q(\sfS)\setminus\Q$ is a Liouville set. 

It is easily checked that if 
$$
\liminf_{n\rightarrow \infty} \frac{u_n}{u_n'} >0,
$$
then $\sfK_{\uq,\uu}$ is a subfield of $\sfK_{\uq, \uu'}$. In particular if
$$
\liminf_{n\rightarrow \infty} \frac{u_n}{n} >0,
$$
then $\sfK_{\uq,\uu}$ is a subfield of $\sfK_{\uq}$, while if
$$
\limsup_{n\rightarrow \infty} \frac{u_n}{n} <+\infty
$$
then $\sfK_{\uq}$ is a subfield of $\sfK_{\uq,\uu}$.

 If $R\in\Q(X_1,\dots,X_\ell)$ is a rational fraction and if $\xi_1,\dots,\xi_\ell$ are elements of a Liouville set $\sfS$ such that $\eta=R(\xi_1,\ldots,\xi_\ell)$ is defined, then Theorem $\ref{Theorem:Field}$ implies that $\eta$ is either a rational number or a Liouville number, and in the second case $\sfS\cup\{\eta\}$ is a Liouville set. For instance, if, in addition, $R$ is not constant and $\xi_1,\dots,\xi_\ell$ are algebraically independent over $\Q$, then $\eta$ is a Liouville number and $\sfS\cup\{\eta\}$ is a Liouville set. For $\ell=1$, this yields: 

\begin{corollary}\label{Corollary:q(X)}
Let $R\in\Q(X)$ be a rational fraction and let $\xi$ be a Liouville number. Then $R(\xi)$ is a Liouville number and $\{\xi,R(\xi)\}$ is a Liouville set. 
\end{corollary}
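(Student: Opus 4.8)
The plan is to read this off as the one-variable instance of the general principle stated in the paragraph preceding the statement, whose engine is Theorem \ref{Theorem:Field}. First I would invoke the equivalence $(i)\Leftrightarrow(iii)$: since $\xi$ is a Liouville number, the singleton $\sfS=\{\xi\}$ is a Liouville set, so $\Q(\sfS)=\Q(\xi)$ is a Liouville field. Because $\xi$ is a Liouville number it is transcendental, hence it is not among the (algebraic) poles of $R$, and $R(\xi)$ is a well-defined element of $\Q(\xi)$. The general consequence of Theorem \ref{Theorem:Field} recorded above then gives the dichotomy: $R(\xi)$ is either a rational number or a Liouville number.

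Next I would rule out the rational alternative. Writing $R=A/B$ with $A,B\in\Q[X]$ coprime, if $R(\xi)=r\in\Q$ then $A(\xi)-rB(\xi)=0$; the polynomial $A-rB$ is nonzero precisely because $R$ is not the constant $r$, so this would be a nontrivial algebraic relation for $\xi$ over $\Q$, contradicting transcendence. This is the one place where the non-constancy of $R$ enters (for constant $R$ the value $R(\xi)$ is rational, so the statement must be read with $R$ nonconstant). Thus $R(\xi)\in\Q(\xi)\setminus\Q$, and since every element of a Liouville set is by definition a Liouville number, the dichotomy forces $R(\xi)$ to be a Liouville number.

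Finally, I would use the further consequence of Theorem \ref{Theorem:Field} that $\Q(\sfS)\setminus\Q=\Q(\xi)\setminus\Q$ is itself a Liouville set. Both $\xi$ and $R(\xi)$ lie in $\Q(\xi)\setminus\Q$, and any subset of a Liouville set is a Liouville set; hence $\{\xi,R(\xi)\}$ is a Liouville set, which completes the argument.

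I do not expect a genuine obstacle: all the substantive work, namely the closure of $\Q\cup\sfS_{\uq,\uu}$ under the field operations with explicit control of the auxiliary denominators $b_n$ and of the constants $\kappa_1,\kappa_2$, is already carried by Theorem \ref{Theorem:Field}. The only points demanding care are bookkeeping ones: confirming that $R(\xi)$ is defined, which is immediate from transcendence, and excluding the rational value, which is what pins down the nonconstancy hypothesis on $R$.
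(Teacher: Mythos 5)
Your proposal is correct and follows essentially the same route as the paper, which derives the corollary as the $\ell=1$ case of the remark preceding it: Theorem \ref{Theorem:Field} gives the dichotomy (rational or Liouville), and transcendence of $\xi$ together with nonconstancy of $R$ rules out the rational value. Your explicit observation that constant $R$ must be excluded, and your detour through the fact that $\Q(\xi)\setminus\Q$ is a Liouville set rather than directly citing ``$\sfS\cup\{\eta\}$ is a Liouville set,'' are only minor bookkeeping variations on the paper's argument.
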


We now show that $ \sfS_{\uq,\uu}$ is either empty or else uncountable and we characterize such sets. 

\begin{theorem}\label{Theorem::UncountableLiouvilleSets}
Let $\uq$ be an increasing sequence of positive integers and $\uu = (u_n)_{n\geq 1}$ be an increasing sequence of positive real numbers such that $u_{n+1} \geq u_n +1$. Then 
 the Liouville set $\sfS_{\uq, \uu}$ is non empty if and only if 
$$
\limsup_{n\to\infty} \frac{\log q_{n+1}}{u_n\log q_n} > 0.$$
Moreover, if the set $\sfS_{\uq, \uu}$ is non empty, then it has the power of continuum.

\end{theorem}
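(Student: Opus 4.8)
My plan is to prove the two implications separately and to read off the cardinality statement from the construction used for sufficiency. Necessity is the softer half, so I would dispatch it first. Suppose $\sfS_{\uq,\uu}\neq\emptyset$ and pick $\xi$ in it, with associated constants $\kappa_1,\kappa_2$ and denominators $(b_n)$. Letting $a_n$ be the nearest integer to $b_n\xi$ gives $|\xi-a_n/b_n|\le b_n^{-1}q_n^{-\kappa_2 u_n}\to0$, so $a_n/b_n\to\xi$; since $\xi$ is irrational the $a_n/b_n$ cannot be eventually constant, hence $a_n/b_n\neq a_{n+1}/b_{n+1}$ for infinitely many $n$. For each such $n$ two distinct rationals lie at distance $\ge(b_nb_{n+1})^{-1}$, so the triangle inequality yields
$$\frac{1}{b_nb_{n+1}}\le\frac{1}{b_nq_n^{\kappa_2u_n}}+\frac{1}{b_{n+1}q_{n+1}^{\kappa_2u_{n+1}}}.$$
Multiplying by $b_nb_{n+1}$ gives $1\le b_{n+1}q_n^{-\kappa_2u_n}+b_nq_{n+1}^{-\kappa_2u_{n+1}}$, and the second term is at most $q_{n+1}^{\kappa_1-\kappa_2u_{n+1}}\to0$ (using $b_n\le q_n^{\kappa_1}\le q_{n+1}^{\kappa_1}$ and $u_{n+1}\to\infty$). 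Thus for large such $n$ the first term is $\ge\tfrac12$, i.e. $q_{n+1}^{\kappa_1}\ge b_{n+1}\ge\tfrac12 q_n^{\kappa_2u_n}$; taking logarithms gives $\frac{\log q_{n+1}}{u_n\log q_n}\ge\frac{\kappa_2}{\kappa_1}-o(1)$ along this infinite set, whence the $\limsup$ is $\ge\kappa_2/\kappa_1>0$.

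For sufficiency, assume the $\limsup$ equals $L>0$, fix $c$ with $0<c<L$, and extract an infinite set of jump indices $m_1<m_2<\cdots$ with $q_{m_k+1}\ge q_{m_k}^{cu_{m_k}}$. I set $\kappa_1=2$, $\kappa_2=c/2$, and build $\xi=\sum_{k\ge1}e_k/B_k$ as a fast series with $B_1\mid B_2\mid\cdots$, arranging that every index $n$ in the block $[m_k+1,m_{k+1}]$ is served by the single denominator $b_n:=B_{k+1}$. The two demands on $B_{k+1}$ are a \emph{budget} bound $B_{k+1}\le q_{m_k+1}^{\kappa_1}$ (so $b_n\le q_n^{\kappa_1}$ throughout the block, the tightest case being its first index, where $q_n$ is smallest) and a \emph{precision} bound forcing $\Vert B_{k+1}\xi\Vert\le q_{m_{k+1}}^{-\kappa_2u_{m_{k+1}}}$ (the tightest case being the last index, by monotonicity of $q_n$ and $u_n$). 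Since $\Vert B_{k+1}\xi\Vert\le B_{k+1}/B_{k+2}$ up to a bounded factor, the precision bound is secured by taking $B_{k+2}$ large, and the budget bound $B_{k+2}\le q_{m_{k+1}+1}^{\kappa_1}$ at the next block leaves room for this precisely because of the jump $q_{m_{k+1}+1}\ge q_{m_{k+1}}^{cu_{m_{k+1}}}$: the admissible interval for $B_{k+2}$ is nonempty as soon as $\kappa_1+\kappa_2u_{m_{k+1}}\le\kappa_1cu_{m_{k+1}}$, which holds for all large $k$ because $\kappa_2/\kappa_1=c/2<c$ and $u_n\to\infty$. Choosing the $B_k$ recursively in these nonempty intervals produces $\xi\in\sfS_{\uq,\uu}$, and the super-geometric growth of $(B_k)$ makes $\xi$ irrational, hence a genuine Liouville number in $\BbbL$.

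The continuum statement then comes for free from the freedom in the construction: at each step the admissible interval for $B_{k+1}$ (equivalently, the choice of digit $e_{k+1}$) contains at least two legal values that preserve both bounds, and distinct choice sequences give distinct $\xi$ because $B_k$ grows faster than any geometric progression. This furnishes an injection $\{1,2\}^{\N}\hookrightarrow\sfS_{\uq,\uu}$, so a nonempty $\sfS_{\uq,\uu}$ contains a Cantor-type family and has the power of the continuum.

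I expect the genuine obstacle to be the bookkeeping in the sufficiency construction: one must check that a single reused denominator $B_{k+1}$ meets simultaneously the budget constraint dictated by the smallest modulus $q_{m_k+1}$ at the block's start and the precision constraint dictated by the (possibly vastly larger) modulus $q_{m_{k+1}}$ at the block's end. The apparent danger is that $q_{m_{k+1}}$ may dwarf $q_{m_k+1}$, demanding a tiny error from a comparatively small denominator; the resolution, and the crux of the argument, is that this tiny error is funded not by $B_{k+1}$ itself but by the next jump at $m_{k+1}$, so that the jump inequality at each $m_k$ is exactly what keeps every admissible interval nonempty.
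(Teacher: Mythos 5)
Your proposal is correct and takes essentially the same route as the paper: your necessity argument inlines the separation estimate that the paper isolates as Lemma~\ref{lemma:ifSqnotempty} (two distinct good approximants of an irrational force $q_{n+1}^{\kappa_1}\ge \tfrac{1}{2}\, q_n^{\kappa_2 u_n}$ infinitely often), and your sufficiency/continuum argument is the paper's block construction, in which a denominator attached to one jump index serves all $n$ in the block, the required precision is funded by the next jump, and the free choice of digits yields continuum many distinct elements. The only blemish is cosmetic: with $\kappa_1=2$ and $\kappa_2=c/2$ you have $\kappa_2/\kappa_1=c/4$ rather than $c/2$, but the inequality your argument actually needs, $\kappa_2<c\kappa_1$, still holds, so nothing breaks.
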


Let $t$ be an irrational real number which is not a Liouville number. By a result due to P.~Erd\H{o}s \cite{erd1}, we can write $t=\xi+\eta$ with two Liouville numbers $ \xi$ and $\eta$. Let $\uq$ be an increasing sequence of positive integers and $\uu$ be an increasing sequence of real numbers such that $\xi \in\sfS_{\uq, \uu}$. Since any irrational number in the field $K_{\uq, \uu}$ is in $\sfS_{\uq, \uu}$, it follows that the Liouville number $\eta=t-\xi$ does not belong to $\sfS_{\uq, \uu}$. 

One defines a reflexive and symmetric relation $R$ between two Liouville numbers by $\xi R\eta$ if $\{\xi,\eta\}$ is a Liouville set. The equivalence relation which is induced by $R$ is trivial, as shown by the next result, which is a consequence of Theorem $\ref{Theorem::UncountableLiouvilleSets}$.

\begin{corollary}\label{Corollary:EquivalenceTrivial} 
Let $\xi $ and $\eta$ be Liouville numbers. Then there exists a subset  $\vartheta$ of ${\BbbL}$ having the power of continuum such that, for each such $\varrho \in \vartheta$, both sets $\{\xi,\varrho\}$ and $\{\eta,\varrho\}$ are Liouville sets. 
\end{corollary}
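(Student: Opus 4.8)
The plan is to realize every suitable $\varrho$ as a point of a single set $\sfS_{\uq^*}$, where $\uq^*$ is an increasing sequence obtained by interleaving sequences that witness the Liouville property of $\xi$ and of $\eta$ respectively. The two tools I would lean on are the remark following Lemma~\ref{Lemma:Squ} (passing to a subsequence only enlarges the associated set, $\sfS_{\uq'}\supset\sfS_{\uq}$) and the dichotomy of Theorem~\ref{Theorem::UncountableLiouvilleSets}.

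First, since $\xi$ and $\eta$ are Liouville numbers, the singletons $\{\xi\}$ and $\{\eta\}$ are Liouville sets, so there are increasing sequences $\uq_\xi=(a_k)_{k\ge1}$ and $\uq_\eta=(b_k)_{k\ge1}$ with $\xi\in\sfS_{\uq_\xi}$ and $\eta\in\sfS_{\uq_\eta}$ (throughout, $\uu=\un$). Because any subsequence $\uq'$ of $\uq$ satisfies $\sfS_{\uq'}\supset\sfS_{\uq}$, I am free to replace $\uq_\xi$ and $\uq_\eta$ by arbitrarily sparse subsequences without ejecting $\xi$ or $\eta$. I would do this greedily so as to produce one increasing sequence $\uq^*=(q_n^*)$ with $q_{2k-1}^*=a_k$ and $q_{2k}^*=b_k$ (after re-indexing), arranged so that at every even-to-odd step $\log q_{n+1}^*\ge n\log q_n^*$; this is possible precisely because both $\uq_\xi$ and $\uq_\eta$ are unbounded, so at each stage the next term may be chosen as large as required.

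Second, this growth condition yields
$$
\limsup_{n\to\infty}\frac{\log q_{n+1}^*}{n\log q_n^*}=+\infty>0,
$$
so by Theorem~\ref{Theorem::UncountableLiouvilleSets} (applied with $\uu=\un$, which indeed satisfies $u_{n+1}=u_n+1$) the set $\sfS_{\uq^*}$ is non-empty and has the power of continuum. I then put $\vartheta=\sfS_{\uq^*}\setminus\{\xi,\eta\}$, which is still of continuum cardinality and is contained in $\BbbL$.

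Finally, I would verify that $\vartheta$ works. Fix $\varrho\in\vartheta$. The odd-indexed subsequence of $\uq^*$ is $\uq_\xi$, so the remark gives $\sfS_{\uq_\xi}\supset\sfS_{\uq^*}\ni\varrho$; since $\xi\in\sfS_{\uq_\xi}$ as well, $\{\xi,\varrho\}$ is a subset of the Liouville set $\sfS_{\uq_\xi}$ (Lemma~\ref{Lemma:Squ}), hence itself a Liouville set. The even-indexed subsequence is $\uq_\eta$, and the identical argument shows $\{\eta,\varrho\}$ is a Liouville set. The only genuinely delicate point is the construction in the second paragraph: one must arrange the interleaving so that the merged sequence still satisfies the $\limsup$ criterion of Theorem~\ref{Theorem::UncountableLiouvilleSets}, which is exactly why the two sequences must first be thinned to create large gaps rather than merely taking their sorted union (a bare union could give $\limsup=0$ and an empty set). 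Excluding $\xi$ and $\eta$ from $\vartheta$ is what guards against the obstruction, illustrated by the Erd\H{o}s example, that $\{\xi,\eta\}$ need not be a Liouville set.
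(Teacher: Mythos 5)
Your proof is correct and follows essentially the same route as the paper: the paper's Lemma~\ref{Lemma:equivalencetrivial} is precisely your greedy interleaving of sparse subsequences with $q_{n+1}\ge q_n^n$, after which the paper likewise applies Theorem~\ref{Theorem::UncountableLiouvilleSets} to the merged sequence and places $\varrho$ together with $\xi$ (resp.\ $\eta$) inside the set attached to the even- (resp.\ odd-) indexed subsequence. The only cosmetic differences are that you invoke the subsequence-monotonicity remark where the paper re-derives $\xi\in\sfS_{\uq'}$ directly from $\Vert a_n\xi\Vert\le a_n^{-n}$, and that your exclusion of $\xi$ and $\eta$ from $\vartheta$ is harmless but unnecessary (had $\eta$ lain in $\sfS_{\uq^*}$, your own argument would show $\{\xi,\eta\}$ is a Liouville set).
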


In \cite{Maillet}, \'{E} Maillet introduces the definition of Liouville numbers {\it corresponding} to a given Liouville number. However this definition depends on the choice of a given sequence $\uq$ giving the rational approximations. This is why we start with a sequence $\uq$ instead of starting with a given Liouville number. 

The intersection of two nonempty Liouville sets maybe empty. More generally, we show that there are uncountably many Liouville sets $\sfS_{\uq}$ with pairwise empty intersections. 

\begin{proposition}\label{proposition:UncountablymanyLiouvilleSets}

For $0<\tau<1$, define $\uq^{(\tau)}$ as the sequence $(q^{(\tau)}_n)_{n\ge 1}$ with 
$$
q^{(\tau)}_n=2^{n! \lfloor n^{\tau} \rfloor } \qquad (n\ge 1).
$$ 
Then the sets $\sfS_{\uq^{(\tau)}}$, $0<\tau<1$, are nonempty (hence uncountable) and pairwise disjoint.

\end{proposition}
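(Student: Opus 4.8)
The plan is to separate the two assertions: nonemptiness (together with the continuum conclusion) will follow directly from Theorem \ref{Theorem::UncountableLiouvilleSets}, whereas disjointness is the substantial part. For nonemptiness, fix $0<\tau<1$. Since $\sfS_{\uq^{(\tau)}}=\sfS_{\uq^{(\tau)},\un}$ and $\un=(n)_{n\ge1}$ satisfies $u_{n+1}=u_n+1$, Theorem \ref{Theorem::UncountableLiouvilleSets} applies, so I only need to verify its criterion $\limsup_n \frac{\log q^{(\tau)}_{n+1}}{n\log q^{(\tau)}_n}>0$. As $\log q^{(\tau)}_n = n!\lfloor n^\tau\rfloor\log 2$, this ratio equals $\frac{(n+1)\lfloor(n+1)^\tau\rfloor}{n\lfloor n^\tau\rfloor}$, which tends to $1$; hence the $\limsup$ is $1>0$, the set $\sfS_{\uq^{(\tau)}}$ is nonempty, and the ``moreover'' part of Theorem \ref{Theorem::UncountableLiouvilleSets} shows it has the power of continuum.

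For disjointness I would argue by contradiction. Take $0<\tau<\tau'<1$ (without loss of generality) and suppose some $\xi$ lies in $\sfS_{\uq^{(\tau)}}\cap\sfS_{\uq^{(\tau')}}$. Unwinding the definition of $\sfS_{\uq,\un}$, I obtain constants $\kappa_1,\kappa_2>0$ and integers $b_n$ with $1\le b_n\le 2^{\kappa_1 n!\lfloor n^\tau\rfloor}$ and $\Vert b_n\xi\Vert\le 2^{-\kappa_2 n\cdot n!\lfloor n^\tau\rfloor}$, and likewise $\kappa_1',\kappa_2'>0$ and integers $b_n'$ attached to $\tau'$. Letting $p_n,p_n'$ be the nearest integers to $b_n\xi,b_n'\xi$ and writing $\epsilon_n=p_n-b_n\xi$, $\epsilon_n'=p_n'-b_n'\xi$, one has $p_nb_n'-p_n'b_n=\epsilon_nb_n'-\epsilon_n'b_n$. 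The first step is to show this integer vanishes for all large $n$: indeed $|\epsilon_n|b_n'\le 2^{n!(\kappa_1'\lfloor n^{\tau'}\rfloor-\kappa_2 n\lfloor n^\tau\rfloor)}$ and $|\epsilon_n'|b_n\le 2^{n!(\kappa_1\lfloor n^\tau\rfloor-\kappa_2' n\lfloor n^{\tau'}\rfloor)}$, and in each exponent the subtracted term grows like $n^{1+\tau}$ (resp.\ $n^{1+\tau'}$) while the added term grows only like $n^{\tau'}$ (resp.\ $n^{\tau}$), so both quantities tend to $0$. Being an integer of absolute value $<1$, the difference is therefore $0$, i.e.\ the two rational approximations coincide, say $r_n:=p_n/b_n=p_n'/b_n'=a_n/c_n$ in lowest terms, so that $c_n\le b_n$.

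The crux is then that this common value has a small denominator yet enormous accuracy: $c_n\le b_n\le 2^{\kappa_1 n!\lfloor n^\tau\rfloor}$, whereas $|\xi-r_n|\le\Vert b_n'\xi\Vert\le 2^{-\kappa_2' n\cdot n!\lfloor n^{\tau'}\rfloor}$. If $r_n\ne r_{n+1}$, then $1/(c_nc_{n+1})\le|r_n-r_{n+1}|\le 2\cdot 2^{-\kappa_2' n\cdot n!\lfloor n^{\tau'}\rfloor}$ by the triangle inequality. Taking logarithms forces $\kappa_2' n\cdot n!\lfloor n^{\tau'}\rfloor\le\log_2(c_nc_{n+1})+1$, whose right-hand side is of size $\kappa_1(n+1)^{1+\tau}n!$ up to lower-order terms; since $1+\tau'>1+\tau$, the left-hand side dominates for large $n$, a contradiction. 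Hence $r_n=r_{n+1}$ for all large $n$, so $r_n$ is eventually a fixed rational $r$, and then $|\xi-r|\le 2^{-\kappa_2' n\cdot n!\lfloor n^{\tau'}\rfloor}\to 0$ gives $\xi=r\in\Q$, contradicting $\xi\in\BbbL$. This contradiction proves $\sfS_{\uq^{(\tau)}}\cap\sfS_{\uq^{(\tau')}}=\emptyset$.

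I expect the main obstacle to be the bookkeeping of the competing growth rates: one must pit the denominator exponent $n^{1+\tau}$ coming from $\tau$ against the accuracy exponent $n^{1+\tau'}$ coming from $\tau'$, and in the first step guarantee that $p_nb_n'-p_n'b_n$ is genuinely forced to be $0$ rather than merely small. The shared factor $n!$ is precisely what makes these comparisons clean, since after dividing through by it one is left comparing the powers $n^{1+\tau}$, $n^{\tau'}$, $n^{1+\tau'}$ and $n^{\tau}$, where the strict inequality $\tau<\tau'$ together with $\tau'<1$ supplies every separation that is needed.
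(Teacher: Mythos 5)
Your proof is correct. The nonemptiness half is exactly the paper's argument: both verify the criterion of Theorem \ref{Theorem::UncountableLiouvilleSets} by computing that $\log q^{(\tau)}_{n+1}/\bigl(n\log q^{(\tau)}_n\bigr)=(n+1)\lfloor (n+1)^\tau\rfloor/\bigl(n\lfloor n^\tau\rfloor\bigr)\to 1$. For disjointness, however, you take a genuinely different route. The paper does not argue directly with the hypothetical common element $\xi$: for $\tau_1<\tau_2$ it interleaves the two sequences into a single increasing sequence $\uq$ with $q_{2n}=q_n^{(\tau_1)}$ and $q_{2n+1}=q_n^{(\tau_2)}$, checks that $\log q_{2n+1}/(n\log q_{2n})\to 0$ and $\log q_{2n+2}/(n\log q_{2n+1})\to 0$ (here is where $\tau_1<\tau_2<1$ enters), and then invokes the emptiness direction of Theorem \ref{Theorem::UncountableLiouvilleSets} to get $\sfS_{\uq}=\emptyset$, together with the (implicit, and valid here because the interleaving is perfectly alternating) inclusion $\sfS_{\uq^{(\tau_1)}}\cap\sfS_{\uq^{(\tau_2)}}\subset\sfS_{\uq}$. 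You instead reprove the relevant mechanism from scratch: the integer $p_nb_n'-p_n'b_n$ is killed by the exponent comparison $n^{\tau'}\ll n^{1+\tau}$ and $n^{\tau}\ll n^{1+\tau'}$, forcing the two approximations to coincide; then the common rational $r_n$ stabilizes because $n^{1+\tau}\ll n^{1+\tau'}$, and $\xi$ would be rational. This is essentially the content of the paper's Lemma \ref{lemma:ifSqnotempty} and the emptiness direction of Theorem \ref{Theorem::UncountableLiouvilleSets}, specialized and made quantitative for these particular sequences. What the paper's route buys is brevity and reuse of machinery (the same merging trick reappears in Propositions \ref{proposition:SubLiouvilleSets} and \ref{proposition:13}); what your route buys is self-containedness for the disjointness claim --- in particular you never need to justify the inclusion of the intersection into the merged set $\sfS_{\uq}$, a step the paper leaves unstated and which, as Proposition \ref{proposition:13} shows, is not automatic for arbitrary unions and so deserves the justification that the alternating indices make it work. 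Your exponent bookkeeping is sound throughout.
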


To prove that a real number is not a Liouville number is most often difficult. But to prove that a given real number does not belong to some Liouville set $\sfS$ is easier. If $\uq'$ is a subsequence of a sequence $\uq$, one may expect that $\sfS_{\uq'}$ may often contain strictly $\sfS_{\uq}$. Here is an example.

\begin{proposition}\label{proposition:SubLiouvilleSets}
Define the sequences $\uq$, $\uq'$ and $\uq''$ by 
$$
q_n=2^{n!}, 
\quad
q'_n=q_{2n}=2^{(2n)!} 
\quad\hbox{and}\quad
q''_n=q_{2n+1}=2^{(2n+1)!} 
\qquad (n\ge 1),
$$
so that $\uq$ is the increasing sequence deduced from the union of $\uq'$ and $\uq''$. 
Let $\lambda_n$ be a sequence of positive integers such that 
$$
\lim_{n\rightarrow \infty} \lambda_n=
\infty
\quad\hbox{and}\quad \lim_{n\rightarrow \infty}\frac{ \lambda_n}{n}=0.
$$
Then the number 
$$
\xi:=\sum_{n\ge 1} \frac{1}{2^{(2n-1)!\lambda_n}}
$$
belongs to 
$\sfS_{\uq'}$ but not to $\sfS_{\uq}$. Moreover
$$
\sfS_{\uq}= \sfS_{\uq'}\cap \sfS_{\uq''}.
$$
\end{proposition}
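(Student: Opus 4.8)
The plan is to split the statement into three parts and prove them in the order: (1) the general identity $\sfS_{\uq}=\sfS_{\uq'}\cap\sfS_{\uq''}$; (2) the membership $\xi\in\sfS_{\uq'}$; and (3) the non-membership $\xi\notin\sfS_{\uq''}$, which combined with $\sfS_{\uq}\subseteq\sfS_{\uq''}$ (a consequence of (1)) gives $\xi\notin\sfS_{\uq}$. Throughout I abbreviate the exponents of $\xi$ by $e_j=(2j-1)!\,\lambda_j$, so that $\xi=\sum_{j\ge1}2^{-e_j}$, and the truncation $\xi_m=\sum_{j\le m}2^{-e_j}=a_m/2^{e_m}$ has odd numerator $a_m$, hence reduced denominator exactly $2^{e_m}$.

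For the identity I would argue purely from the interleaving $q_{2n}=q'_n$ and $q_{2n+1}=q''_n$, using that membership in any of these sets constrains only large indices. If $\eta\in\sfS_{\uq}$ is witnessed by $\kappa_1,\kappa_2$ and integers $(b_n)$, then the subsequence $(b_{2n})_n$ witnesses $\eta\in\sfS_{\uq'}$ and $(b_{2n+1})_n$ witnesses $\eta\in\sfS_{\uq''}$; the replacement of the parameter $u_n=n$ by its value along the subsequence only rescales $\kappa_2$ by a bounded factor, since e.g. $q_{2n}^{\kappa_2\cdot2n}=(q'_n)^{2\kappa_2 n}$. Conversely, given witnesses for $\eta$ in both $\sfS_{\uq'}$ and $\sfS_{\uq''}$, I interleave them to build $(b_n)$ for $\uq$, again absorbing the index mismatch into the constants. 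This step is routine.

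To show $\xi\in\sfS_{\uq'}$ (where $\log_2 q'_m=(2m)!$), I take $b_m=2^{e_m}$. Since $\lambda_m=o(m)$ one has $e_m=(2m-1)!\,\lambda_m\le(2m)!$ for large $m$, so $b_m\le q'_m$ and the size condition holds with $\kappa_1=1$. Moreover $\Vert b_m\xi\Vert=b_m\sum_{j>m}2^{-e_j}\le2\cdot2^{e_m-e_{m+1}}$, and because $e_{m+1}\ge(2m+1)(2m)!$ the gap $e_{m+1}-e_m$ exceeds $\kappa_2 m(2m)!=\kappa_2 m\log_2 q'_m$ for any fixed $\kappa_2$ and large $m$; this gives the approximation condition and, incidentally, shows $\xi\in\BbbL$.

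The crux is $\xi\notin\sfS_{\uq''}$, where $\log_2 q''_m=(2m+1)!$. Fix arbitrary $\kappa_1,\kappa_2>0$ and any integer $b$ with $1\le b\le(q''_m)^{\kappa_1}=2^{\kappa_1(2m+1)!}$; I will bound $\Vert b\xi\Vert$ from below for all large $m$. Let $p$ be the nearest integer to $b\xi$. If $p/b=\xi_m$, then $2^{e_m}\mid b$, so $\Vert b\xi\Vert=b\,|\xi-\xi_m|\ge2^{e_m}\cdot2^{-e_{m+1}}=2^{-(e_{m+1}-e_m)}$; since $e_{m+1}-e_m\le(2m+1)!\,\lambda_{m+1}<\kappa_2 m(2m+1)!$ for large $m$ — exactly where $\lambda_{m+1}=o(m)$ is essential — this lower bound exceeds $(q''_m)^{-\kappa_2 m}$. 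If instead $p/b\ne\xi_m$, then $|\xi_m-p/b|\ge1/(b\,2^{e_m})$, while $|\xi-\xi_m|\le2\cdot2^{-e_{m+1}}$ is negligible compared to $1/(b\,2^{e_m})$ because $\kappa_1(2m+1)!+e_m<e_{m+1}$ for large $m$ (here $\lambda_{m+1}\to\infty$ is used); hence $\Vert b\xi\Vert\ge\tfrac12\,2^{-e_m}$, which again exceeds $(q''_m)^{-\kappa_2 m}$ since $e_m<\kappa_2 m(2m+1)!$. In both cases the defining inequality of $\sfS_{\uq''}$ fails at index $m$, and since this holds for all large $m$, we conclude $\xi\notin\sfS_{\uq''}$. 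The main obstacle is precisely this lower bound: one must rule out every denominator up to $2^{\kappa_1(2m+1)!}$ at once, and the whole mechanism rests on the gap $e_{m+1}-e_m$ being too small — because $\lambda$ grows sublinearly — to be upgraded into an approximation of the required strength $(q''_m)^{-\kappa_2 m}$ at the odd scales.
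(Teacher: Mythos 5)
Your proposal is correct, and its two substantive estimates follow the same route as the paper: approximate $\xi$ by its truncations to get $\xi\in\sfS_{\uq'}$, then for non-membership fix $\kappa_1,\kappa_2$, take an arbitrary denominator $b\le (q''_m)^{\kappa_1}$, and split according to whether the nearest fraction $p/b$ coincides with the truncation $\xi_m$ or not, using $\lambda_{m+1}<\kappa_2 m$ (sublinearity) in the coincidence case and $\lambda_{m+1}>\kappa_1+2$ (divergence) in the other. There are, however, two genuine differences worth recording. First, in the coincidence case the paper only considers denominators $s$ with $q_{2n+1}\le s\le q_{2n+1}^{\kappa_1}$, i.e.\ it implicitly invokes the normalization of Remark $\ref{Remark:definitionLiouvilleSet}$, and proves $|\xi-r/s|>s^{-\kappa_2 n}$ on that range; you instead observe that $p/b=\xi_m$ forces $2^{e_m}\mid b$ (the numerator $a_m$ being odd), hence $b\ge 2^{e_m}$ and $\Vert b\xi\Vert\ge 2^{-(e_{m+1}-e_m)}$, which lets you treat every $b\ge 1$ at once and state the conclusion directly against $(q''_m)^{-\kappa_2 m}$ as in the definition --- a small but real simplification that makes the non-membership argument self-contained. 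Second, you actually prove the closing identity $\sfS_{\uq}=\sfS_{\uq'}\cap\sfS_{\uq''}$ by interleaving witnesses and absorbing the bounded index distortion $n\mapsto 2n,\,2n+1$ into $\kappa_2$; the paper's proof stops after establishing $\xi\notin\sfS_{\uq''}$ and never addresses this clause (only the trivial inclusion $\sfS_{\uq}\subset\sfS_{\uq'}\cap\sfS_{\uq''}$ is recorded, in the discussion before the statement), so on this point your write-up is the more complete one, and your emphasis on bounded index distortion correctly isolates why the identity can fail for block-type interleavings, as in Proposition $\ref{proposition:13}$. One small caveat: your blanket claim that $\xi_m$ has reduced denominator exactly $2^{e_m}$ requires $e_j<e_m$ for all $j<m$, which holds only for sufficiently large $m$ since $(\lambda_j)$ need not be monotone; as all your estimates are asymptotic anyway, this costs nothing, but the qualifier should be added.
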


When $\uq$ is the increasing sequence deduced form the union of $\uq'$ and $\uq''$, we always have $\sfS_{\uq}\subset\sfS_{\uq'}\cap\sfS_{\uq''}$; Proposition \ref{proposition:UncountablymanyLiouvilleSets} gives an example where $\sfS_{\uq'}\ne\emptyset$ and $\sfS_{\uq''}\ne\emptyset$, while $\sfS_{\uq}$ is the empty set. In the example from Proposition \ref {proposition:SubLiouvilleSets}, the set $\sfS_{\uq}$ coincides with $\sfS_{\uq'}\cap \sfS_{\uq''}$. This is not always the case. 

\begin{proposition}\label{proposition:13} There exists two increasing sequences $\uq'$ and $\uq''$ of positive integers with union $\uq$ such that $\sfS_{\uq}$ is a strict nonempty subset of $\sfS_{\uq'}\cap \sfS_{\uq''}$.
\end{proposition}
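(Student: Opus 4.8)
The inclusion $\sfS_{\uq}\subset\sfS_{\uq'}\cap\sfS_{\uq''}$ always holds, so the task is to make it strict while keeping $\sfS_{\uq}$ nonempty. The plan starts from understanding why equality held in Proposition~\ref{proposition:SubLiouvilleSets}: there each $q_m$ carries a subsequence index $n$ with $m\asymp n$, so the exponent $m$ demanded at position $m$ by $\sfS_{\uq}$ differs from the exponent $n$ supplied by $\sfS_{\uq'}$ or $\sfS_{\uq''}$ only by a bounded factor, which is absorbed by shrinking $\kappa_2$. To defeat this I will make the two subsequences wildly unbalanced, so that infinitely many terms $q_m$ of $\uq$ carry a subsequence index $n$ with $m/n\to\infty$; then no single constant $\kappa_2$ can convert exponent $n$ into exponent $m$.

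Concretely I build $\uq'$ and $\uq''$ in blocks. Writing $\ell=\log_2 q$ for the scale of a term $q$, the $n$-th block of $\uq''$ is a cluster of $c_n=n$ terms packed into a short window of scales $[D_n,2D_n]$, while $\uq'$ contributes a single lonely term at a far larger scale $s_n$, the growth being arranged so that $D_n\ll s_n\ll D_{n+1}$. In the sorted union $\uq$ the lonely term $q'_n=2^{s_n}$ then occupies position $m_n=\sum_{k\le n}(k+1)\asymp n^2/2$, so $m_n/n\to\infty$. For the test number I take a lacunary $\xi=\sum_j 2^{-e_j}$ with exactly three exponents per block: $\alpha_n=D_n$, $\beta_n=\alpha_n+G_n$ with $G_n=2m_nD_n$, and $\gamma_n=s_n$, together with $D_{n+1}=(n+1)s_n$. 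The single long gap $G_n$, placed at the low scale $D_n$, produces through the denominator $2^{D_n}$ an approximation with $\Vert 2^{D_n}\xi\Vert\approx 2^{-G_n}$ that simultaneously certifies every $\uq''$-term of the $n$-th cluster, since $G_n\ge j\ell''_j$ for each of them; hence $\xi\in\sfS_{\uq''}$. The moderate gap of length $ns_n$ above $\gamma_n$ gives, through $2^{s_n}=q'_n$, precisely $\Vert q'_n\xi\Vert\approx (q'_n)^{-n}$, so $\xi\in\sfS_{\uq'}$; both memberships use the fixed constants $\kappa_1=1$, $\kappa_2=\tfrac12$.

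To show $\xi\notin\sfS_{\uq}$ I test the inflated positions $m_n$. Any admissible denominator satisfies $b\le q_{m_n}^{\kappa_1}=2^{\kappa_1 s_n}$, which lies far below $2^{\alpha_{n+1}}=2^{(n+1)s_n}$. Expanding $b\xi$ around the partial sum at $\gamma_n$ and using that its numerator is odd — so that $b$ not divisible by $2^{s_n}$ forces $\Vert b\xi\Vert\ge 2^{-s_n-1}$, while multiples of $2^{s_n}$ give at best $\Vert b\xi\Vert\approx 2^{-ns_n}$ — yields $\Vert b\xi\Vert\ge 2^{-ns_n-1}$, an effective exponent at scale $s_n$ of only $n+o(1)$. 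Since $m_n/n\to\infty$, this is below $\kappa_2 m_n$ for every fixed $\kappa_2$ and all large $n$, so the defining inequality of $\sfS_{\uq}$ fails at infinitely many $m$. Finally $\sfS_{\uq}\ne\emptyset$ by Theorem~\ref{Theorem::UncountableLiouvilleSets}, since at the passage from the top of a cluster (scale $2D_n$) to the lonely term (scale $s_n$) the ratio $\frac{\log q_{m+1}}{m\log q_m}=\frac{s_n}{2mD_n}\to\infty$, so the required $\limsup$ is positive.

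The crux, and the step I expect to cost the most, is the \emph{leakage} control just sketched: the very abundance of approximations that places $\xi$ in $\sfS_{\uq''}$ must be prevented from certifying $\xi\in\sfS_{\uq}$ at the inflated positions. This is exactly why the clusters are forced to sit at scales $D_n$ with $D_n/s_n\to 0$, so that their long gaps $G_n\asymp m_nD_n$ translate into only the negligible exponent $G_n/s_n=m_n(D_n/s_n)=o(m_n)$ at scale $s_n$; and it is why the non-membership argument must bound \emph{all} denominators $b\le 2^{\kappa_1 s_n}$, not merely the powers of two, which is where the oddness of the partial-sum numerators and the enormous ratio $e_{k+1}/(\kappa_1 s_n)\to\infty$ enter. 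Once these two points are secured, the routine estimates reduce to checking $\beta_n\ll s_n\ll D_{n+1}$, which only constrains the growth of $(D_n)$ and $(s_n)$.
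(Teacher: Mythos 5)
Your proposal is correct, and the key estimates all check out: with $G_n=2m_nD_n$ and $j\ell''_j\le m_n\cdot 2D_n$ the single denominator $2^{D_n}$ does certify the whole $n$-th cluster of $\uq''$; the gap $(n+1)s_n-s_n=ns_n$ gives $\xi\in\sfS_{\uq'}$; the odd-numerator dichotomy (either $r/b$ equals the partial sum $A_n/2^{s_n}$, forcing $2^{s_n}\mid b$ and $\Vert b\xi\Vert\ge 2^{-ns_n}$, or else $\Vert b\xi\Vert\ge 2^{-s_n}-b\,2^{-(n+1)s_n+1}\ge 2^{-s_n-1}$) kills membership in $\sfS_{\uq}$ at the positions $m_n$ since $n\ll\kappa_2 m_n$; and the jump $s_n/G_n\to\infty$ from cluster top to singleton makes Theorem \ref{Theorem::UncountableLiouvilleSets} give $\sfS_{\uq}\ne\emptyset$. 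The proof skeleton is the same as the paper's (explicit lacunary binary series, the "unique dominant approximation" dichotomy, the limsup criterion for nonemptiness), but your decomposition of $\uq$ is genuinely different. The paper keeps the two subsequences of comparable density, splitting the indices into alternating blocks with doubly exponential boundaries $\lambda_s=2^{2^s}$ and taking $q_n=2^{d_n}$, $d_{n+1}=k\,d_n$ when $n=n'_k$ (resp.\ $h\,d_n$ when $n=n''_h$); the index mismatch then appears only at block starts $n=\lambda_{2s}$, where the $\uq'$-index $k(s)<\sqrt{\lambda_{2s}}=\sqrt{n}$ lags behind the union index, and the test number is $\sum_n 1/q_n$, so that each $q_n$ is itself the certifying denominator. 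You instead make the densities unequal (clusters of size $n$ versus singletons), so the mismatch (union index $m_n\asymp n^2/2$ versus $\uq'$-index $n$) occurs at every singleton, and your test number has only three exponents per block, with a denominator $2^{D_n}$ that is not a term of $\uq''$ certifying an entire cluster — a nice exploitation of the fact that the definition only requires $b_j\le (q''_j)^{\kappa_1}$, not that $b_j$ belong to the sequence. Quantitatively the two mechanisms produce the same square-root mismatch (your available exponent $n\approx\sqrt{2m_n}$ at position $m_n$, the paper's $\approx\sqrt{n}$ at position $n$); the paper's version is more symmetric between $\uq'$ and $\uq''$, while yours isolates the sparse-versus-dense mechanism more transparently and avoids the doubly exponential bookkeeping.
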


Also, we prove that given any increasing sequence $\uq$, there exists a subsequence $\uq'$ of $\uq$ such that $\sfS_{\uq}$ is a strict subset of $\sfS_{\uq'}$. More generally, we prove

\begin{proposition}\label{proposition:14}
Let $\uu = (u_n)_{n\geq 1}$ be a sequence of positive real numbers such that for every $n\geq 1$, we have $\sqrt{u_{n+1}} \leq u_n + 1 \leq u_{n+1}$. Then any increasing sequence $\uq$ of positive integers has a subsequence $\uq'$ for which $\sfS_{\uq', \uu}$ strictly contains $\sfS_{\uq, \uu}$. In particular, for any increasing sequence $\uq$ of positive integers has a subsequence $\uq'$ for which $\sfS_{\uq'}$ is strictly contains $\sfS_{\uq}$. 
\end{proposition}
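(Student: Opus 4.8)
Since $u_{n+1}\ge u_n+1$, the sequence $\uu$ is strictly increasing and $u_n\to\infty$; consequently the monotonicity remark recorded just after Lemma~\ref{Lemma:Squ} applies and gives $\sfS_{\uq',\uu}\supseteq\sfS_{\uq,\uu}$ for \emph{every} subsequence $\uq'$ of $\uq$ (take $b'_k=b_{m_k}$ and use $u_{m_k}\ge u_k$). Hence the inclusion is automatic, and it suffices to produce one subsequence $\uq'$ together with a real number $\xi\in\sfS_{\uq',\uu}\setminus\sfS_{\uq,\uu}$; this also covers the case $\sfS_{\uq,\uu}=\emptyset$, since we will in any event have $\xi\in\sfS_{\uq',\uu}$. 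The plan is to build $\xi$ as a super-lacunary binary series whose only good approximations sit at the sparse scales defining $\uq'$, and are provably too coarse at the omitted scales.

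First I would fix the data recursively. Using $q_n\to\infty$ and $u_n\to\infty$, choose indices $n_1<m_1<n_2<m_2<\cdots$ and integers $\lambda_k\to\infty$ so that, writing $e_k=\lfloor\lambda_k\log_2 q_{n_k}\rfloor$ (so $2^{e_k}$ is essentially $q_{n_k}^{\lambda_k}$), one has for every $k$: (a)~$e_k\le\log_2 q_{m_k}$; (b)~$\lambda_{k+1}\ge u_k+2$; and (c)~$\lambda_k\le\sqrt{u_{n_k}}$. Conditions (b)--(c) are compatible because one may first choose $n_{k+1}>m_k$ with $u_{n_{k+1}}\ge(u_k+3)^2$ and then pick an integer $\lambda_{k+1}\in[\,u_k+2,\sqrt{u_{n_{k+1}}}\,]$; afterwards $m_{k+1}>n_{k+1}$ is chosen large enough for (a). This is where the two-sided control on $\uu$ is convenient: the lower bound $u_{n+1}\ge u_n+1$ forces $u_n\to\infty$ so that $\lambda_k\to\infty$ remains below $\sqrt{u_{n_k}}$, while the bound $\sqrt{u_{n+1}}\le u_n+1$ keeps $\uu$ from growing so fast as to obstruct (c). Set $\uq'=(q_{m_k})_{k\ge1}$ and
$$
\xi=\sum_{k\ge1}2^{-e_k}.
$$
From (a)--(b) one gets $e_{k+1}\ge 2e_k$, so the series is super-lacunary and $\xi$ is a well-defined irrational number.

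Next I would verify $\xi\in\sfS_{\uq',\uu}$ by taking the approximating denominators $b_k=2^{e_k}$. By (a) we have $b_k\le q_{m_k}$, so the size condition of \eqref{Equation:LiouvilleSet} holds with $\kappa_1=1$. Moreover
$$
\Vert b_k\xi\Vert=\Bigl\Vert\sum_{j>k}2^{\,e_k-e_j}\Bigr\Vert=\sum_{j>k}2^{\,e_k-e_j}\le 2^{\,e_k-e_{k+1}+1},
$$
and (a)--(b) yield $e_{k+1}-e_k\ge u_k\log_2 q_{m_k}+1$, whence $\Vert b_k\xi\Vert\le q_{m_k}^{-u_k}$. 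Thus $\xi\in\sfS_{\uq',\uu}$ with $\kappa_1=\kappa_2=1$.

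The hard part, which I expect to be the main obstacle, is $\xi\notin\sfS_{\uq,\uu}$: one must rule out \emph{all} auxiliary denominators at once, not merely the partial sums. Fix $\kappa_1,\kappa_2>0$ and take $n=n_k$ for large $k$. Since $\lambda_k\to\infty$, for $k$ large we have $q_{n_k}^{\kappa_1}\le 2^{\,e_k-e_{k-1}-2}$ (because $e_k-e_{k-1}\ge(\lambda_k-1)\log_2 q_{n_k}-3$ and $\lambda_k\gg\kappa_1$). For such $b\le q_{n_k}^{\kappa_1}$, write $b\xi$ as its head $b\sum_{j<k}2^{-e_j}$, a multiple of $2^{-e_{k-1}}$, plus a tail $b\sum_{j\ge k}2^{-e_j}\le 2^{-e_{k-1}-1}$; comparing the nearest-integer distances in the two cases (head integral or not) gives the uniform bound $\Vert b\xi\Vert\ge 2^{-e_k}\ge q_{n_k}^{-\lambda_k}$ for all $1\le b\le q_{n_k}^{\kappa_1}$. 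By (c), $\lambda_k\le\sqrt{u_{n_k}}<\kappa_2 u_{n_k}$ for $k$ large, so $\Vert b\xi\Vert>q_{n_k}^{-\kappa_2 u_{n_k}}$; no admissible $b$ works at $n_k$. As $\kappa_1,\kappa_2$ were arbitrary, $\xi\notin\sfS_{\uq,\uu}$, so the containment is strict. The genuinely delicate step above is the uniform lower bound for $\Vert b\xi\Vert$, which hinges on the super-lacunarity $e_{k+1}\ge 2e_k$ together with the gap $q_{n_k}^{\kappa_1}\le 2^{\,e_k-e_{k-1}-2}$ keeping the tail below $2^{-e_{k-1}-1}$. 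Finally $\un=(n)_{n\ge1}$ satisfies $\sqrt{u_{n+1}}=\sqrt{n+1}\le n+1=u_n+1\le u_{n+1}$, so the special case $\uu=\un$ yields the last assertion.
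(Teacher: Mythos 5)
Your proof plan is correct, and it shares the paper's overall skeleton — the automatic inclusion $\sfS_{\uq,\uu}\subseteq\sfS_{\uq',\uu}$ for subsequences (valid since $\uu$ is increasing), plus a lacunary series that is well approximated at the scales of $\uq'$ and provably badly approximable at infinitely many scales of $\uq$ — but the implementation is genuinely different. The paper first reduces to sequences satisfying $q_{n+1}>q_n^{u_n}$ (every $\uq$ has such a subsequence), takes $\uq'$ to be the odd-indexed terms, and uses $\xi=\sum_n 1/d_n$ with $d_n=q_n$ for even $n$ and $d_n=q_{n-1}^{\lfloor\sqrt{u_n}\rfloor}$ for odd $n$; the denominators of the truncations are the products $b_n=d_1\cdots d_n\le q_n^2$, and non-membership at even $n$ is shown by separating an arbitrary fraction $r/s$ from $a_n/b_n$ by the triangle inequality, the key exponent comparison being $\lfloor\sqrt{u_{n+1}}\rfloor$ versus $\kappa_2 u_n$ — exactly where the hypothesis $\sqrt{u_{n+1}}\le u_n+1$ is invoked. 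You instead select interlaced bad/good scales $n_k<m_k$ recursively, with $n_k$ chosen so late that $\sqrt{u_{n_k}}$ dominates $u_{k-1}+3$, use a purely dyadic series, and prove non-membership as a uniform lower bound $\Vert b\xi\Vert\ge 2^{-e_k}\ge q_{n_k}^{-\lambda_k}$ over all $b\le q_{n_k}^{\kappa_1}$ via the base-$2$ head/tail splitting; since $\lambda_k\le\sqrt{u_{n_k}}=o(u_{n_k})$, this beats every $\kappa_2>0$ at once. Your route buys two things. First, robustness: contrary to your own parenthetical remark, your recursion never actually uses the upper bound $\sqrt{u_{n+1}}\le u_n+1$ — condition (c) is satisfiable simply because $n_{k+1}$ is chosen after $u_k$ and $u_n\to\infty$ — so your argument only needs $\uu$ nondecreasing and unbounded, i.e., it proves the proposition under weaker hypotheses than stated. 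Second, the uniformity in $\kappa_2$ is cleaner: the paper's bad-scale estimate requires $\lfloor\sqrt{u_{n+1}}\rfloor<\kappa_2 u_n$ for every fixed $\kappa_2>0$ and all large even $n$, which the stated hypothesis alone does not obviously supply when $u_{n+1}$ is close to $(u_n+1)^2$ (it is fine for $\uu=\un$), whereas your freedom in placing $n_k$ makes that comparison automatic. The one step of yours that genuinely required checking — the uniform bound on $\Vert b\xi\Vert$ via the dichotomy on whether the head $b\sum_{j<k}2^{-e_j}$ is an integer — does hold: in the integral case $\Vert b\xi\Vert$ equals the tail, which lies in $[2^{-e_k},2^{-e_{k-1}-1}]$, and otherwise the fractional part of $b\xi$ lies in $[2^{-e_{k-1}},1-2^{-e_{k-1}-1}]$, giving $\Vert b\xi\Vert\ge 2^{-e_{k-1}-1}\ge 2^{-e_k}$ in both cases.
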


 \begin{proposition}\label{proposition:Squ-densebutnotGdelta} 
 The sets $\sfS_{\uq, \uu}$ are not  $G_\delta$ subsets of ${\Bbb R}$. 
If they are non empty, then they are  dense in ${\Bbb R}$.
\end{proposition}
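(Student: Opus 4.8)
The plan is to prove the two assertions separately: density I would read off from the field structure furnished by Theorem~\ref{Theorem:Field}, while the failure to be $G_\delta$ I would derive from the Baire category theorem, after first showing that every nonempty $\sfS_{\uq,\uu}$ is meager. Since the empty set is trivially $G_\delta$, the non-$G_\delta$ assertion is to be understood for the nonempty sets, which by Theorem~\ref{Theorem::UncountableLiouvilleSets} are exactly the uncountable ones.

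For density, assume $\sfS_{\uq,\uu}\neq\emptyset$ and fix $\xi\in\sfS_{\uq,\uu}$. By Theorem~\ref{Theorem:Field}, $\sfK_{\uq,\uu}=\Q\cup\sfS_{\uq,\uu}$ is a field, so $a+\xi\in\sfK_{\uq,\uu}$ for all $a\in\Q$; as $\xi$ is irrational, $a+\xi\notin\Q$ and hence $a+\xi\in\sfS_{\uq,\uu}$. Since $\{a+\xi:a\in\Q\}$ is already dense in $\R$, density follows immediately.

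For meagerness, I would cover $\sfS_{\uq,\uu}$ by countably many nowhere dense sets indexed by the constants $\kappa_1,\kappa_2$. Writing $\kappa_1=a$ and $\kappa_2=1/m$ with $a,m$ positive integers, set $B_n=\lfloor q_n^{a}\rfloor$, $\epsilon_n=q_n^{-u_n/m}$, and
\[
A_n^{(a,m)}=\bigl\{x\in\R:\ \Vert bx\Vert\le\epsilon_n\ \text{for some integer } b\in[1,B_n]\bigr\}.
\]
From the definition of $\sfS_{\uq,\uu}$ (using the remark that the defining inequalities may be imposed for all $n$) one gets $\sfS_{\uq,\uu}\subseteq\bigcup_{a,m\ge1}\bigcup_{N\ge1}\bigcap_{n\ge N}A_n^{(a,m)}$, a countable union. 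The key estimate is that $\{x:\Vert bx\Vert\le\epsilon_n\}$ meets any interval in a proportion at most $2\epsilon_n$, so that for every bounded interval $I$,
\[
\mathrm{meas}\bigl(A_n^{(a,m)}\cap I\bigr)\le 2(|I|+1)\,\epsilon_n B_n\le 2(|I|+1)\,q_n^{\,a-u_n/m}\longrightarrow 0,
\]
since $u_n\to\infty$ and $q_n\ge2$ eventually. Each closed set $\bigcap_{n\ge N}A_n^{(a,m)}$ is contained in every $A_n^{(a,m)}$ with $n\ge N$, hence occupies an arbitrarily small proportion of any fixed interval; consequently it has empty interior and is nowhere dense. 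Therefore $\sfS_{\uq,\uu}$ lies in a countable union of nowhere dense sets and is meager.

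To conclude, if a nonempty $\sfS_{\uq,\uu}$ were $G_\delta$ it would, being dense, be a dense $G_\delta$ and hence comeager in $\R$; but a comeager subset of the Baire space $\R$ cannot be meager, contradicting the previous paragraph. The routine parts are the field-theoretic density argument and the covering; the main obstacle, and the heart of the proof, is the measure estimate showing that each $\bigcap_{n\ge N}A_n^{(a,m)}$ is nowhere dense, i.e.\ that the admissible denominators $b\le q_n^{\kappa_1}$ are too few to let $\Vert bx\Vert\le q_n^{-\kappa_2 u_n}$ hold on a set of positive ``thickness''.
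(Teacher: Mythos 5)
Your proof is correct, and for the density half it coincides with the paper's: both fix $\xi\in\sfS_{\uq,\uu}$ and use Theorem \ref{Theorem:Field} to conclude $\xi+\Q\subset\sfS_{\uq,\uu}$. For the non-$G_\delta$ half, however, you take a genuinely different route. The paper reuses the field structure a second time: choosing an irrational $t$ that is not a Liouville number, it has $t\notin\sfK_{\uq,\uu}$, hence $\sfS_{\uq,\uu}\cap(t+\sfS_{\uq,\uu})=\emptyset$ by Theorem \ref{Theorem:Field}; since a translate of a dense $G_\delta$ is again a dense $G_\delta$ and two dense $G_\delta$ subsets of $\R$ must meet (Baire), $\sfS_{\uq,\uu}$ cannot be a dense $G_\delta$. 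You instead prove outright that $\sfS_{\uq,\uu}$ is meager: your covering $\sfS_{\uq,\uu}\subseteq\bigcup_{a,m\ge1}\bigcup_{N\ge1}\bigcap_{n\ge N}A_n^{(a,m)}$ is valid (the union over $N$ absorbs the ``sufficiently large $n$'' quantifier, and replacing arbitrary $\kappa_1,\kappa_2$ by integers $a\ge\kappa_1$, $1/m\le\kappa_2$ only weakens the defining inequalities), each $A_n^{(a,m)}$ is closed so each $\bigcap_{n\ge N}A_n^{(a,m)}$ is closed, and your counting estimate $\mathrm{meas}\bigl(A_n^{(a,m)}\cap I\bigr)\le 2(|I|+1)\,q_n^{a-u_n/m}\to0$ correctly shows these closed sets contain no interval, hence are nowhere dense; a nonempty $\sfS_{\uq,\uu}$, being dense, would be comeager if it were $G_\delta$, contradicting meagerness. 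Both arguments ultimately rest on the Baire category theorem, but the trade-off is clear: the paper's proof is two lines and needs to know that irrational non-Liouville numbers exist, while yours is longer but self-contained on that point and yields strictly more, namely that $\sfS_{\uq,\uu}$ is meager (and indeed your estimate also gives Lebesgue measure zero).
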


The proof of lemma $\ref{Lemma:Squ}$ is given in section $\ref{section:PfLemmaSqu}$, 
the proof of Theorem $\ref{Theorem:Field}$ in section $\ref{section:proofField}$, 
the proof of Theorem $\ref{Theorem::UncountableLiouvilleSets}$ in section $\ref{Section:UncountableLiouvilleSetss}$,
the proof of Corollary $\ref{Corollary:EquivalenceTrivial}$ in section $\ref{Section:EquivalenceTrivial}$. 
The proofs of Propositions $\ref{proposition:UncountablymanyLiouvilleSets}$, $\ref{proposition:SubLiouvilleSets}$, $\ref{proposition:13}$ and $\ref{proposition:14}$ are given in section $\ref{section:PfPropositions}$ and the proof of Proposition $\ref {proposition:Squ-densebutnotGdelta}$ is given in section 
$\ref{section:Squ-densebutnotGdelta}$.

\section{Proof of lemma $\ref{Lemma:Squ}$}\label{section:PfLemmaSqu}

\begin{proof}[Proof of Lemma $\ref{Lemma:Squ}$]
Given $\uq$ and $\uu$, define inductively a sequence of positive integers $(m_n)_{n\ge 1}$ as follows. Let $m_1$ be the least integer $m\ge 1$ such that $u_{m}>1$. Once $m_1,\dots,m_{n-1}$ are known, define $m_n$ as the least integer $m>m_{n-1}$ for which 
$u_m>n$. Consider the subsequence $\uq'$ of $\uq$ defined by $q'_n=q_{m_n}$. Then $\sfS_{\uq,\uu}\subset \sfS_{\uq'}$, hence $\sfS_{\uq,\uu}$ is a Liouville set. 
\end{proof}

\begin{remark}\label{Remark:definitionLiouvilleSet}
In the definition of a Liouville set, if assumption $(\ref{Equation:LiouvilleSet})$ is satisfied for some $\kappa_1$, then it is also satisfied with $\kappa_1$ replaced by any $\kappa'_1>\kappa_1$. Hence there is no loss of generality to assume $\kappa_1>1$. Then, in this definition, one could add to 
$(\ref{Equation:LiouvilleSet})$ the condition $q_n \leq 
b_n$. Indeed, if, for some $n$, we have $b_n<q_n $, then we set 
$$
b'_{n}=\left\lceil \frac{q_n}{b_{n}}\right\rceil b_{n},
$$
so that 
$$
q_n\le b'_{n}\le q_n+b_{n}\le 2 q_n.
$$
Denote by $a_n$ the nearest integer to $b_n\xi$ and set 
$$
a'_{n}=\left\lceil \frac{q_n}{b_n}\right\rceil a_{n}.
$$
Then, for $\kappa'_2<\kappa_2$ and, for sufficiently large $n$, we have
$$ 
\bigl| b'_n \xi - a'_{n} \bigr|=
\left\lceil \frac{q_n}{b_{n}}\right\rceil 
\bigl| b_{n} \xi - a_{n} \bigr| \le \frac{q_n}{q_n^{\kappa_2 n}}\le
\frac{1}{(q_n)^{\kappa'_2 n}}\cdotp
$$
Hence condition $(\ref{Equation:LiouvilleSet})$ can be replaced by
$$
q_{n} \leq 
b_{n} \leq q_{n}^{\kappa_1} \mbox{ and } \Vert b_n\xi \Vert \le \frac{1}{q_n^{ \kappa_2 n}}.
$$
Also, one deduces from Theorem $\ref{Theorem::UncountableLiouvilleSets}$,  that the sequence $\bigl(b_n\bigr)_{n\ge 1}$ is increasing for sufficiently large $n$. Note also that same way we can assume that 
$$
q_{n} \leq 
b_{n} \leq q_{n}^{\kappa_1} \mbox{ and } \Vert b_n\xi \Vert \le \frac{1}{q_n^{ \kappa_2 u_n}}.
$$
\end{remark}

\section{Proof of Theorem $\ref{Theorem:Field}$}\label{section:proofField}

We first prove the following: 

\begin{lemma}\label{Lemma:unsuralpha}
Let $\uq$ be an increasing sequence of positive integers and $\uu = (u_n)_{n\geq 1}$ be an increasing sequence of real numbers. Let $\xi\in\sfS_{\uq, \uu}$. Then $1/\xi\in \sfS_{\uq, \uu}$. 
\end{lemma}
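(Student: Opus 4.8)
The plan is to transfer a good rational approximation of $\xi$ into one of $1/\xi$ simply by inverting it, and then to absorb the resulting constants. Starting from $\xi\in\sfS_{\uq,\uu}$, there are positive constants $\kappa_1,\kappa_2$ and positive integers $b_n$ with $1\le b_n\le q_n^{\kappa_1}$ and $\Vert b_n\xi\Vert\le q_n^{-\kappa_2 u_n}$ for all large $n$. Invoking Remark \ref{Remark:definitionLiouvilleSet}, I would first normalize so that in addition $q_n\le b_n$; this costs nothing and will make the size and nonvanishing of the new denominator transparent. Let $a_n$ be the nearest integer to $b_n\xi$, so $|b_n\xi-a_n|=\Vert b_n\xi\Vert$.

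The candidate denominator for $1/\xi$ is $b'_n:=|a_n|$, with $b_n$ (up to sign) serving as the nearest integer to $b'_n/\xi$. Since $\xi$ is irrational it is nonzero, and because $q_n\le b_n$ we have $|b_n\xi|\ge q_n|\xi|\to\infty$; hence $a_n\ne 0$ and $b'_n\ge 1$ for all large $n$, so $b'_n$ is a genuine positive integer. The heart of the argument is the single inequality, in which I use that $\Vert\cdot\Vert$ is even and that $b_n$ is an integer:
$$\Vert b'_n/\xi\Vert=\Vert a_n/\xi\Vert\le\left|\frac{a_n}{\xi}-b_n\right|=\frac{|a_n-b_n\xi|}{|\xi|}=\frac{\Vert b_n\xi\Vert}{|\xi|}\le\frac{1}{|\xi|}q_n^{-\kappa_2 u_n}.$$

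It then remains to tidy up the constants, which is routine. For the size bound, $b'_n=|a_n|\le|\xi|b_n+1\le|\xi|q_n^{\kappa_1}+1\le q_n^{\kappa_1+1}$ for large $n$; and since $|\xi|^{-1}$ is a fixed constant while $q_n^{(\kappa_2/2)u_n}\to\infty$, one has $|\xi|^{-1}q_n^{-\kappa_2 u_n}\le q_n^{-(\kappa_2/2)u_n}$ for large $n$. Thus $1/\xi$ satisfies the defining inequalities of $\sfS_{\uq,\uu}$ with constants $\kappa_1+1$ and $\kappa_2/2$. Finally I would verify $1/\xi\in\BbbL$, which is part of membership: $1/\xi$ is irrational, and the inequalities just obtained, together with $u_n\to\infty$ and $b'_n\to\infty$, show that the rationals $c_n/b'_n$ (with $c_n$ the nearest integer to $b'_n/\xi$) approximate $1/\xi$ to unbounded order, so $1/\xi$ is a Liouville number.

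The computation itself is short; the only delicate points I anticipate are the sign bookkeeping and the nonvanishing and size of $a_n$. Both are disposed of cleanly: the sign issue by the evenness of $\Vert\cdot\Vert$, and the rest by first normalizing to $q_n\le b_n$ via Remark \ref{Remark:definitionLiouvilleSet}, which forces $b_n\to\infty$ and hence $a_n\ne 0$. So I do not expect a serious obstacle beyond this preliminary normalization.
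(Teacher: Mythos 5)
Your proof is correct and follows essentially the same route as the paper's: both take the nearest integer $a_n$ to $b_n\xi$ as the new denominator (with $b_n$ as the new numerator), derive $\Vert\, |a_n|\,\xi^{-1}\Vert\le \Vert b_n\xi\Vert/|\xi|$, and then absorb the constant $1/|\xi|$ and the bound on $|a_n|$ by a fixed power of $q_n$ into new exponents $\kappa_1'$, $\kappa_2'$. The only cosmetic differences are that you secure $a_n\neq 0$ by first normalizing $q_n\le b_n$ via Remark \ref{Remark:definitionLiouvilleSet}, where the paper deduces $|a_n|\to\infty$ directly from the irrationality of $\xi$, and that you explicitly verify $1/\xi\in\BbbL$, a point the paper leaves implicit.
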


As a consequence, if $\sfS$ is a Liouville set, then, for any $\xi\in\sfS$, the set $\sfS\cup\{1/\xi\}$ is a Liouville set. 

\begin{proof}[Proof of Lemma $\ref{Lemma:unsuralpha}$]
Let $\uq=(q_n)_{n\ge 1}$ be an increasing sequence of positive integers such that, for sufficiently large $n$, 
$$
\Vert\ b_n\xi\Vert\le q_n^{-u_n},
$$
where $b_n \leq q_n^{\kappa_1}$. 
Write $\Vert b_n\xi\Vert=|b_n\xi-a_n|$ with $a_n\in\Z$. Since $\xi\not\in\Q$, the sequence $(|a_n|)_{n\ge 1}$ tends to infinity; in particular, for sufficiently large $n$, we have $a_n\not=0$. Writing
$$
\frac{1}{\xi}-\frac{b_n}{a_n}=\frac{-b_n}{\xi a_n}\left(\xi-\frac{a_n}{b_n}\right),
$$
one easily checks that, for sufficiently large $n$, 
$$
\Vert\ |a_n|\xi^{-1}\Vert\le |a_n|^{-u_n/2} 
\quad\hbox{and}\quad
1\le |a_n|<b_n^2 \leq q_n^{2\kappa_1}.
$$
\end{proof}

\begin{proof}[Proof of Theorem $\ref{Theorem:Field}$]
Let us check that for $\xi$ and $\xi'$ in $\Q\cup\sfS_{\uq, \uu}$, we have $\xi-\xi'\in \Q\cup\sfS_{\uq, \uu}$ and $\xi\xi'\in \Q\cup\sfS_{\uq,\uu}$. Clearly, it suffices to check
\\
$(1)$ For $\xi$ in $\sfS_{\uq, \uu}$ and $\xi'$ in $\Q$, we have $\xi-\xi'\in \sfS_{\uq, \uu}$ and $\xi\xi'\in \sfS_{\uq, \uu}$. 
\\
$(2)$ For $\xi$ in $\sfS_{\uq, \uu}$ and $\xi'$ in $\sfS_{\uq, \uu}$ with $\xi-\xi'\not\in\Q$, we have $\xi-\xi'\in \sfS_{\uq,\uu}$.
\\
$(3)$ For $\xi$ in $\sfS_{\uq, \uu}$ and $\xi'$ in $\sfS_{\uq, \uu}$ with $\xi\xi'\not\in\Q$, we have $\xi\xi'\in \sfS_{\uq, \uu}$. 

The idea of the proof is as follows. When $\xi\in \sfS_{\uq, \uu}$ is approximated by $a_n/b_n$ and when $\xi'=r/s\in \Q$, then $\xi-\xi'$ is approximated by $(sa_n-rb_n)/b_n$ and 
$\xi\xi'$ by $ra_n/sb_n$.
 When $\xi\in \sfS_{\uq, \uu}$ is approximated by $a_n/b_n$ and $\xi'\in \sfS_{\uq, \uu}$ by $a'_n/b'_n$, then 
 $\xi-\xi'$ is approximated by $(a_nb'_n-a'_n b_n)/b_nb'_n$ and 
$\xi\xi'$ by $a_na'_n/b_nb'_n$. The proofs which follow amount to writing down carefully these simple observations. 

 Let $\xi'' = \xi - \xi'$ and $\xi^* = \xi\xi'$. Then the sequence $(a_n'')$ and $(b_n'')$ are corresponding to $\xi''$; Similarly $(a_n^*)$ and $(b_n^*)$ corresponds to $\xi^*$.
 
Here is the proof of $(1)$. Let $\xi\in \sfS_{\uq, \uu}$ and $\xi'=r/s\in \Q$, with $r$ and $s$ in $\Z$, $s>0$. There are two constants $\kappa_1$ and $\kappa_2$ and there are sequences of rational integers 
 $\bigl(a_n\bigr)_{n\ge 1}$ and $\bigl(b_n\bigr)_{n\ge 1}$ such that 
 $$
 1\le b_n\le q_n^{\kappa_1} \quad \hbox{and}\quad 
 0<\bigl| b_n\xi - a_n\bigr|\le \frac{1}{q_n^{\kappa_2u_n}}\cdotp
 $$
 Let $\kappatilde_1>\kappa_1$ and $\kappatilde_2<\kappa_2$. Then, 
\begin{align}
\notag
 b_n''&= b_n^*= sb_n. 
\\
\notag 
a_n''&= sa_n-rb_n, 
\\
\notag
a_n^*&= ra_n.
\end{align}
Then one easily checks that, for sufficiently large $n$, we have
\begin{align}
\notag
 0<\bigl| b_n''\xi''- a_n''\bigr|
 =
s \, \bigl| b_n\xi - a_n\bigr|
 \le \frac{1}{q_n^{\kappa''_2u_n}},
 \\
 \notag
 0<\bigl| b_n^*\xi^* - a_n^*\bigr|
 =
|r| \, \bigl| b_n\xi - a_n\bigr|
 \le \frac{1}{q_n^{\kappa^*_2u_n}}\cdotp
\end{align}

Here is the proof of $(2)$ and $(3)$. Let $\xi$ and $\xi'$ be in $ \sfS_{\uq, \uu}$. There are constants 
$\kappa'_1$, $\kappa'_2$ $\kappa''_1$ and $\kappa''_2$
and there are sequences of rational integers 
 $\bigl(a_n\bigr)_{n\ge 1}$, $\bigl(b_n\bigr)_{n\ge 1}$,
 $\bigl(a_n'\bigr)_{n\ge 1}$ and $\bigl(b_n'\bigr)_{n\ge 1}$
 such that 
 $$
 1\le b_n\le q_n^{\kappa'_1} \quad \hbox{and}\quad 
 0<\bigl| b_n\xi - a_n\bigr|\le \frac{1}{q_n^{\kappa'_2u_n}},
 $$
 $$
 1\le b_n'\le q_n^{\kappa''_1} \quad \hbox{and}\quad 
 0<\bigl| b_n'\xi' - a_n'\bigr|\le \frac{1}{q_n^{\kappa''_2u_n}}\cdotp
 $$
Define 
$\kappatilde_1=\kappa'_1+\kappa''_1$ and let $\kappatilde_2>0$ satisfy $\kappatilde_2<\min\{\kappa'_2,\kappa''_2\}$. Set 
\begin{align}
\notag
 b_n''&= b_n^*= b_nb_n', 
\\
\notag 
a_n''&= a_nb_n'-b_n a_n', 
\\
\notag
a_n^*&= a_na_n'.
\end{align}
Then for sufficiently large $n$, we have
\begin{align}
\notag
 b_n''\xi'' - a_n''
 =
 b_n'\bigl( b_n \xi - a_n\bigr) - b_n \bigl( b_n' \xi' - a_n'\bigr)
\end{align}
and 
$$
 b_n^* \xi^* - a_n^* 
 =
b_n \xi \, \bigl( b_n' \xi' - a_n'\bigr)
+
a_n' \, \bigl( b_n \xi - a_n \bigr),
$$
hence
$$
\bigl|
b_n''\xi'' - a_n''\bigr|
 \le \frac{1}{q_n^{\kappatilde_2u_n}}
$$
and
 $$
\bigl| b_n^*\xi^*- a_n^* \bigr|
 \le \frac{1}{q_n^{\kappatilde_2u_n}}\cdotp
 $$
 Also we have 
 $$
 1\le b_n'' \le q_n^{\kappatilde_1}
 \quad
 \hbox{and}
 \quad
 1\le b_n^* \le q_n^{\kappatilde_1}.
 $$
 The assumption $\xi-\xi'\not\in\Q$ (resp $\xi\xi'\not\in\Q$) implies 
 $b_n'' \xi''\not= a_n''$ (respectively, $b_n^* \xi^*\not= a_n^*$). 
 Hence $\xi-\xi'$ and $\xi\xi'$ are in $\sfS_{\uq, \uu}$. This completes the proof of $(2)$ and $(3)$. 
 
 It follows from $(1)$, $(2)$ and $(3)$ that $\Q\cup\sfS_{\uq, \uu} $ is a ring.

Finally, if $\xi\in\Q\cup\sfS_{\uq, \uu}$ is not $0$, then $1/\xi\in\Q\cup\sfS_{\uq,\uu}$, by Lemma $\ref{Lemma:unsuralpha}$. This completes the proof of 
Theorem $\ref{Theorem:Field}$.
\end{proof}

\begin{remark}
Since the field $K_{\uq,\uu}$ does not contain irrational algebraic numbers, $2$ is not a square in $K_{\uq,\uu}$. For $\xi\in\sfS_{\uq,\uu}$, it follows that $\eta=2\xi^2$ is an element in $\sfS_{\uq, \uu}$ which is not the square of an element in $\sfS_{\uq,\uu}$. According to \cite{erd1}, we can write $\sqrt{2}=\xi_1\xi_2$ with two Liouville numbers $\xi_1,\xi_2$; then the set $\{ \xi_1,\xi_2 \} $ is not a Liouville set. 

\smallskip

Let $N$ be a positive integer such that $N$ cannot be written as a sum of two squares of an integer. 
Let us show that, for $\varrho \in\sfS_{\uq,\uu}$, the Liouville number $N \varrho^2\in\sfS_{\uq,\uu}$ is not the sum of two squares of elements in $\sfS_{\uq,\uu}$. Dividing by $\varrho^2$, we are reduced to show that the equation $N=\xi^2+(\xi')^2$ has no solution $(\xi,\xi')$ in $\sfS_{\uq,\uu}\times \sfS_{\uq,\uu}$. Otherwise, we would have, for suitable positive constants $\kappa_1$ and $\kappa_2$, 
\begin{align}
\notag
&
\left|
\xi-\frac{a_n}{b_n}\right|
\le \frac{1}{q_n^{\kappa_2u_n+1}},
\qquad 
1\le b_n \le q_n^{\kappa_1},
\\
\notag
&
\left|
\xi'-\frac{a_n'}{b_n'}\right|
\le \frac{1}{q_n^{\kappa_2u_n+1}},
\qquad 
1\le b_n'\le q_n^{\kappa_1},
\end{align}
hence
$$
\left|
\xi^2 -\frac{
a_n^2}
{ b_n ^2}
\right|
\le
\frac{2|\xi|+1}{q_n^{ \kappa_2u_n+1}},
\qquad 
\left|
(\xi')^2 -\frac{
(a_n')^2}
{ (b_n')^2}
\right|
\le
\frac{2|\xi'|+1}{q_n^{\kappa_2u_n+1}}
$$
and
$$
\left|
\xi^2+(\xi')^2-\frac{
\bigl(
a_nb_n'\bigr)^2+\bigl(a_n'b_n\bigr)^2}
{\bigl(b_nb_n'\bigr)^2}
\right|
\le
\frac{2(|\xi|+|\xi'|+1)}{q_n^{\kappa_2u_n+1}}\cdotp
$$
Using $\xi^2+(\xi')^2=N$, we deduce 
$$
\bigl|
N{\bigl(b_nb_n'\bigr)^2-\bigl(
a_nb_n'\bigr)^2-\bigl(a_n'b_n\bigr)^2}\bigr|<1.
$$
The left hand side is an integer, hence it is $0$:
$$
N{\bigl(b_nb_n'\bigr)^2=
\bigl(
a_nb_n'\bigr)^2+\bigl(a_n'b_n\bigr)^2}.
$$
This is impossible, since the equation $x^2+y^2=Nz^2$ has no solution in positive rational integers. 

Therefore, if we write $N=\xi^2+(\xi')^2$ with two Liouville numbers $\xi,\xi'$, which is possible by the above mentioned result from P.~Erd\H{o}s \cite{erd1},
then the set $\{ \xi,\xi' \} $ is not a Liouville set. 
\end{remark}
 
\section{Proof of Theorem $\ref{Theorem::UncountableLiouvilleSets}$}
\label{Section:UncountableLiouvilleSetss}

We first prove the following lemma which will be required for the proof of part $(ii)$ of Theorem $\ref{Theorem::UncountableLiouvilleSets}$.

\begin{lemma}\label{lemma:ifSqnotempty}
Let $\xi$ be a real number, $n$, $q$ and $q'$ be positive integers. Assume that there exist rational integers $p$ and $p'$ such that $p /q\not=p'/q'$ and 
$$ 
|q \xi -p | \le \frac{1}{q^{u_n}}, \quad |q' \xi -p'| \le \frac{1}{(q')^{ {u_n}+1}}\cdotp
$$
Then we have 
$$
\hbox{either} \quad
q'\ge q^{{u_n}}\quad \hbox{or} \quad q \ge (q')^{u_n}.
$$
\end{lemma}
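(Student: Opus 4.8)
The plan is to argue by contradiction using the triangle inequality on the two rational approximations $p/q$ and $p'/q'$. Since $p/q \neq p'/q'$, the quantity $\left| \frac{p}{q} - \frac{p'}{q'} \right|$ is a nonzero rational number with denominator $qq'$, hence
$$
\left| \frac{p}{q} - \frac{p'}{q'} \right| = \frac{|pq' - p'q|}{qq'} \ge \frac{1}{qq'}.
$$
This is the standard lower bound for distinct rationals, and it will serve as the anchor of the argument.

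Next I would produce an upper bound for the same quantity by routing through $\xi$. Dividing the two hypotheses by $q$ and $q'$ respectively gives
$$
\left| \xi - \frac{p}{q} \right| \le \frac{1}{q^{u_n + 1}}, \qquad \left| \xi - \frac{p'}{q'} \right| \le \frac{1}{(q')^{u_n + 2}},
$$
so by the triangle inequality
$$
\left| \frac{p}{q} - \frac{p'}{q'} \right| \le \frac{1}{q^{u_n + 1}} + \frac{1}{(q')^{u_n + 2}}.
$$
Combining the lower and upper bounds yields
$$
\frac{1}{qq'} \le \frac{1}{q^{u_n + 1}} + \frac{1}{(q')^{u_n + 2}}.
$$

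The heart of the argument is then a purely arithmetic dichotomy extracted from this last inequality. One of the two terms on the right must be at least half of the left-hand side, and I would split into those two cases. In the case where $\frac{1}{q^{u_n+1}}$ dominates, one gets roughly $q^{u_n + 1} \le 2 q q'$, i.e. $q^{u_n} \lesssim q'$, which should be pushed to the clean conclusion $q' \ge q^{u_n}$; symmetrically, the other case should give $q \ge (q')^{u_n}$. The main obstacle, and the only delicate point, is bookkeeping the constant factors of $2$ and the shift in exponents so that the inequalities come out in exactly the stated form $q' \ge q^{u_n}$ or $q \ge (q')^{u_n}$ rather than with some extra multiplicative slack; here I expect to use that $q, q' \ge 2$ and that $u_n$ is a positive real, absorbing the factor of $2$ by lowering the exponent from $u_n + 1$ to $u_n$. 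I would carry out the estimate carefully in each case to confirm the constants vanish, but the logical structure is entirely the lower-bound/upper-bound pinch followed by the two-case split.
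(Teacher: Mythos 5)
Your first half is exactly the paper's first half: the lower bound $|p/q-p'/q'|\ge 1/(qq')$ for distinct rationals, the rewritten hypotheses $|\xi-p/q|\le q^{-(u_n+1)}$ and $|\xi-p'/q'|\le (q')^{-(u_n+2)}$, and the triangle inequality, giving the same pinch
$$
\frac{1}{qq'}\le \frac{1}{q^{u_n+1}}+\frac{1}{(q')^{u_n+2}}.
$$
The gap is in what you yourself call the heart of the argument. Your dichotomy (one of the two terms is at least half of the left-hand side) gives, in the first case, $q^{u_n+1}\le 2qq'$, i.e. $q'\ge q^{u_n}/2$, and the factor $2$ does not vanish the way you hope: dividing by $q$ is already needed to isolate $q'$ (that is the passage from exponent $u_n+1$ to $u_n$, and the $2$ survives it), so absorbing the $2$ costs a second factor $q/2$, yielding only $q'\ge q^{u_n-1}$. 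Both $q'\ge q^{u_n}/2$ and $q'\ge q^{u_n-1}$ are strictly weaker than the stated $q'\ge q^{u_n}$, and no bookkeeping inside your case split can repair this, because the pinch can be nearly tight. Concretely, take $q=2$, $q'=3$, $p=p'=1$, $\xi=0.34$ and $u_n=\log_2(3.01)\approx 1.59$ (recall that in this paper $u_n$ is a positive real number; Theorem \ref{Theorem::UncountableLiouvilleSets} assumes only $u_{n+1}\ge u_n+1$). Then $|q\xi-p|=0.32\le 1/3.01=q^{-u_n}$ and $|q'\xi-p'|=0.02\le (q')^{-(u_n+1)}\approx 1/17.2$, the fractions $1/2$ and $1/3$ are distinct, and the first term of the pinch dominates, so you are in your first case; yet $q'=3<3.01=q^{u_n}$, and also $q=2<(q')^{u_n}\approx 5.7$. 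So your first case simply does not imply $q'\ge q^{u_n}$ (indeed for such non-integral $u_n$ both alternatives of the conclusion fail, so size estimates alone can never give the clean exponents: some integrality input is indispensable).

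The paper's completion after the pinch is different in exactly this respect. It clears denominators to get $q^{u_n}(q')^{u_n+1}\le (q')^{u_n+2}+q^{u_n+1}$, splits on $q<q'$ versus $q\ge q'$ (not on which term dominates), disposes of the trivial cases $u_n=1$ and $q'=1$, and then uses arithmetic, not just size. For $q<q'$ the cross term satisfies $(q/q')^{u_n+1}<1$, so $q^{u_n}<q'+1$, and one concludes $q'\ge q^{u_n}$ by integrality (of $q'$ and, implicitly, of $q^{u_n}$ --- this is precisely the step the counterexample above slips through, $q^{u_n}=3.01$ not being an integer; it is harmless when $u_n$ is a positive integer, as for $\sfS_{\uq}$). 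For $q\ge q'$ one bounds $(q')^{u_n+2}\le (q')^2q^{u_n}$ to get $(q')^{u_n+1}-(q')^2\le q$, and then checks $(q')^{u_n+1}-(q')^2\ge (q')^{u_n}$ using $q'\ge 2$ and $u_n>1$. These rearrangement-plus-integrality steps are the real content of the lemma and are absent from your sketch; as written, your argument establishes only the weakened statement with a multiplicative constant (equivalently, with $u_n$ replaced by $u_n-1$), which is a different lemma.
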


\begin{proof}[Proof of Lemma $\ref{lemma:ifSqnotempty}$]
From the assumptions we deduce 
$$
\frac{1}{q q'}\le
\frac{|p q'-p'q |}{q q'} 
\le 
\left|\xi-\frac{p }{q }\right|+
\left|\xi-\frac{p'}{q'}\right|
\le \frac{1}{q ^{{u_n}+1}}+\frac{1}{(q')^{{u_n}+2}},
$$
hence
$$
q^{u_n}(q')^{{u_n}+1} \le (q')^{{u_n}+2}+q^{{u_n}+1}.
$$
If $q< q' $, we deduce 
$$
q^{u_n}\le q'+
\left(\frac{q}{q'}\right)^{u_n+1} <q'+1.
$$
Assume now $q\ge q' $. Since the conclusion of Lemma $\ref{lemma:ifSqnotempty}$ is trivial if $u_n=1$ and also if $q'=1$, we assume $u_n > 1$ and $q'\ge 2$. 
From 
$$
q^{u_n}(q')^{u_n+1} \le (q')^{u_n+2}+q^{u_n+1}\le (q')^2q^{u_n}+q^{u_n+1}
$$
we deduce 
$$
 (q')^{u_n+1} - (q')^2 \le q.
$$
From $(q')^{u_n-1}>(q')^{u_n-2}$ we deduce $(q')^{u_n-1}\ge (q')^{u_n-2}+1$, which we write as
$$
(q')^{u_n+1}-(q')^2\ge (q')^{u_n}.
$$
Finally
$$
(q')^{u_n} \le 
(q')^ {u_n+1}- (q')^{2}
\le q.
$$
\end{proof}

\begin{proof}[Proof of Theorem $\ref{Theorem::UncountableLiouvilleSets}$]
$\phantom{.}$ 
 Suppose $\displaystyle\limsup_{n\to\infty}\frac{\log q_{n+1}}{u_n\log q_n} = 0$. Then, 
we get, $$
\lim_{n\rightarrow\infty} \frac{\log q_{n+1}}{u_n\log q_n}=0.
$$
Suppose $\sfS_{\uq, \uu} \ne \emptyset.$ Let $\xi \in \sf {S}_{\uq, \uu}$. 
From Remark $\ref{Remark:definitionLiouvilleSet}$, it follows that there exists a sequence $\bigl(b_n\bigr)_{n\ge 1}$ of positive integers and there exist two positive constants $\kappa_1$ and $\kappa_2$ such that, for any sufficiently large $n$, 
$$
q_{n} \leq b_{n} \leq q_{n}^{\kappa_1} \mbox{ and } \Vert b_n \xi \Vert \le q_n^{- \kappa_2 u_n}\cdotp
$$
Let $n_0$ be an integer $\ge \kappa_1$ such that these inequalities are valid for $n\ge n_0$ and such that, for $n\ge n_0$, $q_{n+1}^{\kappa_1}<q_n^{u_n}$ (by the assumption). 
Since the sequence $(q_n)_{n\ge 1}$ is increasing, we have $q_n^{\kappa_1}<q_{n+1}^{u_n}$ for $n\ge n_0$. From the choice of $n_0$ we deduce 
$$
b_{n+1}\le q_{n+1}^{\kappa_1}<q_n^{u_n }\le b_n^{u_n}
$$
and
$$
b_n\le q_n^{\kappa_1}<q_{n+1}^{u_n} \le b_{n+1}^{u_n}
$$
for any $n\ge n_0$. Denote by $a_n$ (resp.~$a_{n+1}$) the nearest integer to $\xi b_n$ (resp.~ to $\xi b_{n+1}$).
Lemma $\ref{lemma:ifSqnotempty}$ with $q$ replaced by $b_n$ and $q'$ by $b_{n+1}$ implies that for each $n\ge n_0$, 
$$
\frac{a_n}{b_n}=
\frac{a_{n+1}}{b_{n+1}}\cdotp
$$
This contradicts the assumption that $\xi$ is irrational. This proves that $\sfS_{\uq, \uu} = \emptyset.$

\bigskip

\noindent Conversely, assume 
$$
\limsup_{n\rightarrow\infty} \frac{\log q_{n+1}}{u_n\log q_n}>0.
$$
Then there exists $\vartheta>0$ and there exists a sequence $(N_\ell)_{\ell\ge 1}$ of positive integers such that 
$$
q_{N_\ell}>q_{N_\ell-1}^{\vartheta (u_{N_\ell-1)}}
$$
for all $\ell\ge 1$. Define a sequence $(c_\ell)_{\ell\ge 1}$ of positive integers by
$$
2^{c_\ell}\le q_{N_\ell} < 2^{c_\ell+1}.
$$
Let $\ue=(e_\ell)_{\ell\ge 1}$ be a sequence of elements in $\{-1,1\}$. Define 
$$
\xi_{\ue}=\sum_{\ell\ge 1} \frac{e_\ell}{2^{c_\ell}}\cdotp
$$
It remains to check that $\xi_{\ue}\in \sfS_{\uq,\uu}$ and that distinct $\ue$ produce distinct $\xi_{\ue}$. 

Let $\kappa_1=1$ and let $\kappa_2$ be in the interval $0<\kappa_2<\vartheta$. For sufficiently large $n$, let $\ell$ be the integer such that $N_{\ell-1}\le n<N_\ell$. Set
$$
b_n=2^{c_{\ell-1}}, \quad a_n=\sum_{h=1}^{\ell-1} e_h 2^{c_{\ell-1}-c_h},
\quad r_n=\frac{a_n}{b_n}\cdotp
$$
We have 
$$
\frac{1}{2^{c_\ell}}<
\left|
\xi_{\ue}-r_n\right|
=
\left|
\xi_{\ue}- 
\sum_{h\ge \ell} \frac{e_h}{2^{c_h}} \right|
\le 
\frac{2}{2^{c_\ell}}\cdotp
$$
Since $\kappa_2<\vartheta$, $n$ is sufficiently large and 
$n\le N_\ell-1$,
we have 
$$
4q_n^{\kappa_2 u_n} \le 
4q_{N_\ell-1}^{\kappa_2u_{N_\ell-1}}\le 
q_{N_\ell},
$$
hence
$$
\frac{2}{2^{c_\ell}}<\frac{4}{q_{N_\ell}}<\frac{1}{q_n^{\kappa_2 u_n}}
$$
for sufficiently large $n$. This proves 
$\xi_{\ue}\in \sfS_{\uq, \uu}$ and hence $ \sfS_{\uq, \uu}$ is not empty. 

\bigskip

Finally, if $\ue$ and $\ue'$ are two elements of $\{-1,+1\}^{\N}$ for which $e_h=e'_h$ for $1\le h< \ell$ and, say, $e_\ell=-1$, $e'_\ell=1$, then 
$$
\xi_{\ue}< \sum_{h=1}^{\ell-1} \frac{e_h}{2^{c_h}} <\xi_{\ue'},
$$
hence $\xi_{\ue}\not= \xi_{\ue'}$. This 
completes the proof of Theorem $\ref{Theorem::UncountableLiouvilleSets}$.

\end{proof}

\section{Proof of Corollary $\ref{Corollary:EquivalenceTrivial}$} \label{Section:EquivalenceTrivial}

The proof of Corollary $\ref{Corollary:EquivalenceTrivial}$ as a consequence of Theorem $\ref{Theorem::UncountableLiouvilleSets}$ relies on the following elementary lemma. 
\begin{lemma}\label{Lemma:equivalencetrivial}
Let $(a_n)_{n\ge 1}$ and $(b_n)_{n\ge 1}$ be two increasing
sequences of positive integers. Then there exists an increasing sequence of positive integers $(q_n)_{n\ge 1}$ satisfying the following properties:
\\
$(i)$ The sequence $(q_{2n})_{n\ge 1}$ is a subsequence of the sequence $(a_n)_{n\ge 1}$.
\\
$(ii)$ The sequence $(q_{2n+1})_{n\ge 0}$ is a subsequence of the sequence $(b_n)_{n\ge 1}$.
\\
$(iii)$ For $n\ge 1$, $q_{n+1}\ge q_n^n$. 

\end{lemma}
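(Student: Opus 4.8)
The plan is to build the sequence $(q_n)_{n\ge 1}$ by a greedy induction, alternately drawing terms from $(b_n)$ for the odd indices and from $(a_n)$ for the even indices, at each stage choosing a term large enough to force the growth condition $(iii)$. Both $(a_n)$ and $(b_n)$ are increasing sequences of positive integers, hence tend to infinity, and this unboundedness is the only property of them I will use.

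First I would fix the base case: let $q_1$ be a term $b_{k_1}$ of $(b_n)$ with $b_{k_1}\ge 2$, which exists because $b_n\to\infty$. Suppose now that $q_1,\dots,q_n$ have been defined, together with the indices used so far in $(a_n)$ and in $(b_n)$, in such a way that the even-indexed $q$'s are $a$-terms with strictly increasing indices, the odd-indexed ones are $b$-terms with strictly increasing indices, and $q_j\ge 2$ together with $q_{j+1}\ge q_j^j$ hold for all $j<n$. To define $q_{n+1}$ I distinguish the parity of $n+1$: if $n+1$ is even I select a term of $(a_n)$, and if $n+1$ is odd I select a term of $(b_n)$. In either case I choose the relevant term to have index strictly larger than every index previously used in that same sequence, and value at least $q_n^n$. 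Such a term exists because the chosen sequence tends to infinity, so its values eventually exceed the fixed bound $q_n^n$ while its index can be taken as large as needed.

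This choice secures all three required properties at once. Property $(iii)$ is immediate from $q_{n+1}\ge q_n^n$. Properties $(i)$ and $(ii)$ hold because, by construction, the $a$-indices used for the even places form a strictly increasing sequence and the $b$-indices used for the odd places form a strictly increasing sequence, so the corresponding $q$'s are genuine subsequences of $(a_n)$ and $(b_n)$ respectively. Finally $(q_n)$ is itself increasing: since $q_1\ge 2$ and $q_{n+1}\ge q_n^n$, an easy induction gives $q_n\ge 2$ for all $n$, whence $q_{n+1}\ge q_n^n\ge 2q_n>q_n$.

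The only point requiring care --- the sole potential obstacle --- is ensuring that the indices drawn from each of $(a_n)$ and $(b_n)$ are strictly increasing, so that $(q_{2n})_{n\ge 1}$ and $(q_{2n+1})_{n\ge 0}$ are honest subsequences rather than merely subsets. This is exactly what the ``index strictly larger than all previously used'' clause guarantees, and it is feasible precisely because both sequences are unbounded; no quantitative estimate beyond $a_n,b_n\to\infty$ is needed.
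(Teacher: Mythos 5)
Your proposal is correct and is essentially the paper's own argument: a greedy induction that alternates between the two sequences and at each step selects a term of the appropriate sequence with value at least $q_n^n$ (the paper takes the \emph{least} such term and lets the monotonicity of the values force the indices to increase, whereas you track the indices explicitly; the two devices are interchangeable). One cosmetic slip: your chain $q_{n+1}\ge q_n^n\ge 2q_n>q_n$ breaks at $n=1$, where $q_n^n=q_1$, so to guarantee strict increase at the very first step you should simply demand $q_2>q_1$ as well, which is possible since $(a_n)$ is unbounded.
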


\begin{proof}[Proof of Lemma $\ref{Lemma:equivalencetrivial}$]
We construct the sequence $(q_n)_{n\ge 1}$ inductively, starting with $q_1=b_1$ and with $q_2$ the least integer $a_i$ satisfying $a_i\ge b_1$. Once $q_n$ is known for some $n\ge 2$, we take for $q_{n+1}$ the least integer satisfying the following properties: 
\\ 
$\bullet$
$q_{n+1}\in \{a_1,a_2,\dots\}$ if $n$ is odd, 
$q_{n+1}\in \{b_1,b_2,\dots\}$ if $n$ is even.
\\
$\bullet$ $q_{n+1}\ge q_n^n$.
\end{proof}

\begin{proof}[Proof of Corollary $\ref{Corollary:EquivalenceTrivial}$]
Let $\xi$ and $\eta$ be Liouville numbers. There exist two sequences of positive integers $(a_n)_{n\ge 1}$ and $(b_n)_{n\ge 1}$, which we may suppose to be increasing, such that 
$$
\Vert a_n \xi \Vert \le a_n^{-n}
\quad\hbox{and}\quad
\Vert b_n \eta \Vert \le b_n^{-n}
$$
for sufficiently large $n$.
Let $\uq=(q_n)_{n\ge 1}$ be an increasing sequence of positive integers satisfying the conclusion of Lemma $\ref{Lemma:equivalencetrivial}$. According to 
Theorem $\ref{Theorem::UncountableLiouvilleSets}$, the Liouville set $\sfS_{\uq}$ is not empty. Let $\varrho\in \sfS_{\uq}$. Denote by $\uq'$ the subsequence $(q_2,q_4,\dots,q_{2n},\dots)$ of $\uq$ and by $\uq''$ the subsequence $(q_1,q_3,\dots,q_{2n+1},\dots)$. We have $\varrho\in \sfS_{\uq} = \sfS_{\uq'}\cap \sfS_{\uq''}$. Since the sequence $(a_n)_{n\ge 1}$ is increasing, we have $q_{2n}\ge a_n$, hence $\xi\in \sfS_{\uq'}$. Also, since the sequence $(b_n)_{n\ge 1}$ is increasing, we have $q_{2n+1}\ge b_n$, hence $\eta\in \sfS_{\uq''}$.
Finally, $\xi$ and $\varrho $ belong to the Liouville set $\sfS_{\uq'}$, while 
$\eta$ and $\varrho $ belong to the Liouville set $\sfS_{\uq''}$.
\end{proof}

\section{Proofs of Propositions $\ref{proposition:UncountablymanyLiouvilleSets}$, $\ref{proposition:SubLiouvilleSets}$, $\ref{proposition:13}$ and $\ref{proposition:14}$ }
\label{section:PfPropositions}

\begin{proof}[Proof of Proposition $\ref{proposition:UncountablymanyLiouvilleSets}$]
The fact that for $0<\tau<1$ the set $\sfS_{\uq^{(\tau)}}$ is not empty 
follows from Theorem $\ref{Theorem::UncountableLiouvilleSets}$, since
$$
\lim_{n\rightarrow\infty} 
\frac{
\log q^{(\tau)}_{n+1}}
{
n\log q^{(\tau)}_n}
=1.
$$
In fact, if $(e_n)_{n\ge 1}$ is a bounded sequence of integers with infinitely many nonzero terms, then
$$
\sum_{n\ge 1} \frac{e_n}{q^{(\tau)}_n}\in \sfS_{\uq^{(\tau)}}.
$$

Let $0<\tau_1<\tau_2<1$.
For $n\ge 1$, define 
$$
q_{2n}=q_n^{(\tau_1)}=2^{n! \lfloor n^{\tau_1} \rfloor}
\quad\hbox{and}\quad
q_{2n+1}=q_n^{(\tau_2)}=2^{n! \lfloor n^{\tau_2} \rfloor}.
$$
One easily checks that $(q_m)_{m\ge 1}$ is an increasing sequence with 
$$
\frac{\log q_{2n+1}}{n\log q_{2n}}\rightarrow 0
\quad\hbox{and}\quad
\frac{\log q_{2n+2}}{n\log q_{2n+1}}\rightarrow 0.
$$
From Theorem $\ref{Theorem::UncountableLiouvilleSets}$ one deduces $\sfS_{\uq^{(\tau_1)}}\cap \sfS_{\uq^{(\tau_2)}}=\emptyset$.

\end{proof}

\begin{proof}[Proof of Proposition $\ref{proposition:SubLiouvilleSets}$]
For sufficiently large $n$, define 
$$
a_n=\sum_{m= 1}^n 2^{(2n)! -(2m-1)!\lambda_m}.
$$ 
Then
 $$
 \frac{1}{q_{2n}^{(2n+1)\lambda_{n+1}} } < \xi-\frac{a_n}{q_{2n}}
 =\sum_{m\ge n+1} \frac{1}{2^{(2m-1)!\lambda_m}}
 \le \frac{2}{q_{2n}^{(2n+1)\lambda_{n+1}} } \cdotp
 $$
The right inequality with the lower bound $\lambda_{n+1}\ge 1$ proves that $\xi\in \sfS_{\uq'}$. 

Let $\kappa_1$ and $\kappa_2$ be positive numbers, $n$ a sufficiently large integer, $s$ an integer in the interval $q_{2n+1}\le s\le q_{2n+1}^{\kappa_1}$ and $r$ an integer. Since $\lambda_{n+1}<\kappa_2n$ for sufficiently large $n$, we have 
$$
q_{2n}^{(2n+1)\lambda_{n+1}}<q_{2n}^{\kappa_2 n (2n+1)}= q_{2n+1}^{\kappa_2 n } \le s^{\kappa_2 n }.
$$
Therefore, if $r/s=a_n/q_{2n}$, then 
$$
\left|\xi-\frac{r}{s}\right| = 
\left|\xi-\frac{a_n}{q_{2n}}\right| > \frac{1}{q_{2n}^{(2n+1)\lambda_{n+1}} } >\frac{1}{s^{\kappa_2n}}\cdotp
$$ 
On the other hand, for $r/s\not =a_n/q_{2n}$, we have 
$$
\left|\xi-\frac{r}{s}\right| \ge 
\left|\frac{a_n}{q_{2n}}-\frac{r}{s}\right|-
\left| \xi-\frac{a_n}{q_{2n}}\right| \ge \frac{1}{q_{2n} s} - \frac{2}{q_{2n}^{(2n+1)\lambda_{n+1}} }\cdotp
$$ 
Since $\lambda_n\rightarrow\infty$, for sufficiently large $n$ we have 
$$
4q_{2n} s\le 4q_{2n} q_{2n+1}^{\kappa_1} 
= 4 q_{2n}^{1+\kappa_1(2n+1)}
\le q_{2n}^{(2n+1)\lambda_{n+1}}
$$
hence
$$
\frac{2}{q_{2n}^{(2n+1)\lambda_{n+1}} }\le \frac{1}{2q_{2n} s} \cdotp
$$
Further
$$
2q_{2n} <q_{2n+1}<q_{2n+1}^{\kappa_2n-1}\le s^{\kappa_2n-1}.
$$
Therefore 
$$
\left|\xi-\frac{r}{s}\right| \ge 
 \frac{1}{2q_{2n} s} >\frac{1}{s^{\kappa_2n}},
 $$
 which shows that $\xi\not\in \sfS_{\uq''}$.
\end{proof}

\begin{proof}[Proof of Proposition $\ref{proposition:13}$]
Let $(\lambda_s)_{s\ge 0}$ be a strictly increasing sequence of positive rational integers with $\lambda_0=1$. Define two sequences $(n'_k)_{k\ge 1}$ and $(n''_h)_{h\ge 1}$ of positive integers as follows. The sequence $(n'_k)_{k\ge 1}$ is the increasing sequence of the positive integers $n$ for which there exists $s\ge 0$ with 
$\lambda_{2s}\le n<\lambda_{2s+1}$, while $(n''_h)_{h\ge 1}$ is the increasing sequence of the positive integers $n$ for which there exists $s\ge 0$ with 
$\lambda_{2s+1}\le n<\lambda_{2s+2}$.

For $s\ge 0$ and $\lambda_{2s}\le n<\lambda_{2s+1}$, set 
$$
k=n-\lambda_{2s}+\lambda_{2s-1}-\lambda_{2s-2}+\cdots+\lambda_1.
$$
Then $n=n'_k$. 

For $s\ge 0$ and $\lambda_{2s+1}\le n<\lambda_{2s+2}$, set 
$$
h=n-\lambda_{2s+1}+\lambda_{2s}-\lambda_{2s-1}+\cdots-\lambda_1+1.
$$
Then $n=n''_h$. 

For instance, when $\lambda_s=s+1$, the sequence $(n'_k)_{k\ge 1}$ is the sequence $(1,3,5\dots)$ of odd positive integers, while $(n''_h)_{h\ge 1}$ is the sequence $(2,4,6\dots)$ of even positive integers. Another example is $\lambda_s=s!$, which occurs in the paper \cite{erd1} by Erd\H{o}s. 

In general, for $n=\lambda_{2s}$, we write $n=n'_{k(s)}$ where 
$$
k(s)=\lambda_{2s-1}-\lambda_{2s-2}+\cdots+\lambda_1<\lambda_{2s-1}.
$$
Notice that $\lambda_{2s}-1=n''_h$ with $h=\lambda_{2s}-k(s)$. 

Next, define two increasing sequences $(d_n)_{n\ge 1}$ and $\uq=(q_n)_{n\ge 1}$ of positive integers by induction, with $d_1=2$,
$$
d_{n+1}=
\begin{cases}
k d_n &\hbox{if $n=n'_k$},\\
h d_n &\hbox{if $n=n''_h$}
\\
\end{cases}
$$
for $n\ge 1$ and $q_n=2^{d_n}$. Finally, let $\uq'=(q'_k)_{k\ge 1}$ and $\uq''=(q''_h)_{h\ge 1}$ be the two subsequences of $\uq$ defined by 
$$
q'_k=q_{n'_k},\quad k\ge 1, \qquad
q''_h=q_{n''_h},\quad h\ge 1.
$$
Hence $\uq$ is the union of theses two subsequences. Now we check that
the number
$$
\xi=\sum_{n\ge 1} \frac{1}{q_n}
$$
belongs to $\sfS_{\uq'}\bigcap \sfS_{\uq''}$. Note that by Theorem $\ref {Theorem::UncountableLiouvilleSets}$ that $\sfS_{\uq} \ne \emptyset$ as $\sfS_{\uq'}\ne \emptyset$ and $\sfS_{\uq''}\ne \emptyset$. 
Define 
$$
a_n=\sum_{m=1}^n 2^{d_n-d_m}.
$$
Then
$$
\frac{1}{q_{n+1}}<\xi-\frac{a_n}{q_n} = \sum_{m\ge n+1} \frac{1}{q_m}< \frac{2}{q_{n+1}}\cdotp
$$
If $n=n'_k$, then 
$$
\left|
\xi-\frac{a_{n'_k}}{q'_k}
\right|
<\frac{2}{(q'_k)^k}
$$
while if $n=n''_h$, then 
$$
\left|
\xi-\frac{a_{n''_h}}{q''_h}
\right|
<\frac{2}{(q''_h)^h}\cdotp
$$
This proves $\xi\in\sfS_{\uq'}\cap \sfS_{\uq''}$.

Now, we choose $\lambda_s=2^{2^s}$ for $s\ge 2$ and we prove that $\xi$ does not belong to $\sfS_{\uq}$. 
Notice that $\lambda_{2s-1}=\sqrt{\lambda_{2s}}$.
Let $n=\lambda_{2s}=n'_{k(s)}$. We have 
$k(s)<\sqrt{\lambda_{2s}}$ and 
$$
\left|
\xi-\frac{a_{n}}{q_n}
\right|
>\frac{1}{q_{n+1}}=\frac{1}{q_{n}^{k(s)}}>\frac{1}{q_{n}^{\sqrt{n}}}\cdotp
$$
Let $\kappa_1$ and $\kappa_2$ be two positive real numbers and assume $s$ is sufficiently large. Further, let $u/v\in\Q$ with $v\le q_n^{\kappa_1}$. If $u/v=a_n/q_n$, then
$$
\left|
\xi-\frac{u}{v}
\right|=
\left|
\xi-\frac{a_{n}}{q_n}
\right|
> \frac{1}{q_{n}^{\sqrt{n}}}> \frac{1}{q_{n}^{\kappa_2n}}\cdotp
$$
On the other hand, if $u/v\not=a_n/q_n$, then
$$
\left|
\xi-\frac{u}{v}
\right|
\ge 
\left|
 \frac{u}{v}- \frac{a_{n}}{q_n}
 \right|
 -
\left| 
\xi-\frac{a_{n}}{q_n}
\right|
$$
with
$$
\left|
 \frac{u}{v}- \frac{a_{n}}{q_n}
 \right|\ge \frac{1}{vq_n}\ge \frac{1}{q_n^{\kappa_1+1}}
 >\frac{2}{q_n^{\sqrt{ n}}} 
 $$
 and 
 $$
\left| 
\xi-\frac{a_{n}}{q_n}
\right|
<\frac{1}{q_{n}^{\sqrt{n}}}\cdotp
 $$
 Hence
 $$
 \left|\xi-\frac{u}{v}\right|>
 \frac{1}{q_{n}^{\sqrt{n}}}
 >\frac{1}{q_n^{\kappa_2 n}}\cdotp
 $$
 This proves Proposition $\ref{proposition:13}$.
\end{proof}

 \begin{proof}[Proof of Proposition $\ref{proposition:14}$]
 
 Let $\uu =(u_n)_{n\geq 1}$ be a sequence of positive real numbers such that $\sqrt{u_{n+1}} \leq u_n+1\leq u_{n+1}$. We prove more precisely that for any sequence $\uq$ such that $q_{n+1}>q_n^{u_n}$ for all $n\ge 1$, the sequence $\uq'=(q_{2m+1})_{m\ge 1}$ has 
$\sfS_{\uq', \uu}\not=\sfS_{\uq,\uu}$. This implies the proposition, since any increasing sequence has a subsequence satisfying $q_{n+1}>q_n^{u_n}$.

Assuming $q_{n+1}>q_n^{u_n}$ for all $n\ge 1$, we define
$$
d_n=\begin{cases}
q_n & \hbox{for even $n$,}\\
q_{n-1}^{\lfloor \sqrt{u_n}\rfloor} & \hbox{for odd $n$.}
\end{cases}
$$
We check that the number 
$$
\xi=\sum_{n\ge 1} \frac{1}{d_n}
$$
satisfies $\xi\in \sfS_{\uq', \uu}$ and $\xi\not\in \sfS_{\uq, \uu}$. 

Set $b_n=d_1d_2\cdots d_n$ and 
$$
a_n=
\sum_{m=1}^{n} \frac{b_n}{d_m}
=
\sum_{m=1}^{n} \prod_{
\atop{1\le i\le n}, {i\not=m}} d_i,
$$
so that
$$
\xi-\frac{a_n}{b_n}=\sum_{m\ge n+1} \frac{1}{d_m}
\cdotp
$$
It is easy to check from the definition of $d_n$ and $q_n$ that we have, for sufficiently large $n$, 
$$
b_n\le q_1\cdots q_n\le q_{n-1}^{u_{n-1}}q_n\le q_n^2
$$
and
$$
\frac{1}{d_{n+1}}\le \xi-\frac{a_n}{b_n}\le \frac{2}{d_{n+1}}\cdotp
$$
For odd $n$, since $d_{n+1}=q_{n+1}\ge q_n^{u_n}$, we deduce
$$
\left|\xi-\frac{a_n}{b_n} \right| \le \frac{2}{q_n^{u_n}},
$$
hence $\xi\in \sfS_{\uq',\uu}$. 

\bigskip

For even $n$, we plainly have
$$
\left|\xi-\frac{a_n}{b_n} \right| > \frac{1}{d_{n+1}}=
\frac{1}{q_{n}^{\lfloor \sqrt{u_{n+1}}\rfloor} } \cdotp
$$ 
Let $\kappa_1$ and $\kappa_2$ be two positive real numbers, and let $n$ be sufficiently large. 
Let $s$ be a positive integer with $s\le q_n^{\kappa_1}$ and let $r$ be an integer. If $r/s= a_n/b_n$, then
$$
\left|\xi-\frac{r}{s} \right| =
\left|\xi-\frac{a_n}{b_n} \right| > \frac{1}{q_{n}^{\kappa_2 u_n}}\cdotp
$$
Assume now
$r/s\not= a_n/b_n$. From
$$
\left|\xi-\frac{a_n}{b_n} \right| \le
\frac{2}{q_{n}^{\lfloor \sqrt{u_{n+1}}\rfloor} }
\le 
 \frac{1}{2q_n^{\kappa_1+2}},
$$
we deduce
$$
\frac{1}{q_n^{\kappa_1+2}}\le \frac{1}{sb_n}\le 
\left|\frac{r}{s}-\frac{a_n}{b_n}\right|\le
\left|\xi-\frac{r}{s} \right| +
\left|\xi-\frac{a_n}{b_n} \right| 
\le \left|\xi-\frac{r}{s} \right| + \frac{1}{2q_n^{\kappa_1+2}},
$$
hence 
$$
\left|\xi-\frac{r}{s} \right| \ge \frac{1}{2q_n^{\kappa_1+2}}> \frac{1}{q_{n}^{\kappa_2 u_n}}\cdotp
$$
This completes the proof that $\xi\not\in \sfS_{\uq, \uu}$. 
 \end{proof}

\section{Proof of Proposition $\ref{proposition:Squ-densebutnotGdelta}$} \label{section:Squ-densebutnotGdelta}
\begin{proof}[Proof of Proposition $\ref{proposition:Squ-densebutnotGdelta}$]

If $\sfS_{\uq,\uu}$ is non empty, let
 $\gamma \in \sfS_{\uq, \uu}$. By Theorem \ref{Theorem:Field}, $\gamma+\Q$ is contained in $\sfS_{\uq,\uu}$, hence $\sfS_{\uq,\uu}$ is dense in $\R$.

Let $t$ be an irrational real number which is not Liouville. Hence $t\not\in \sfK_{\uq,\uu}$, and therefore, 
by Theorem \ref{Theorem:Field}, $\sfS_{\uq,\uu}\cap (t+\sfS_{\uq,\uu})=\emptyset$. This implies that $\sfS_{\uq,\uu}$ is not a $G_\delta$ dense subset of $\R$.

\end{proof}

\vfill

\end{document}